\theoremstyle{plain}
\newtheorem{theorem}{Theorem}
\newtheorem{lemma}[theorem]{Lemma}
\newtheorem{proposition}[theorem]{Proposition}
\newtheorem{corollary}[theorem]{Corollary}
\theoremstyle{definition}
\newtheorem{definition}[theorem]{Definition}
\theoremstyle{remark}
\newtheorem*{remark}{Remark}
\newtheorem*{question}{Question}
\DeclareMathOperator{\CO}{\textrm{Clop}}
\DeclareMathOperator{\Fn}{Fn}
\DeclareMathOperator{\Ex}{Ex}
\DeclareMathOperator{\cl}{cl}
\newcommand{\AAA}{\mathcal{A}}
\newcommand{\BBB}{\mathcal{B}}
\newcommand{\UUU}{\mathcal{U}}
\newcommand{\VVV}{\mathcal{V}}
\newcommand{\RRR}{\mathbb{R}}
\begin{document}

\title{Products and h-homogeneity}

\author{Andrea Medini}

\date{July 13, 2010}

\address{University of Wisconsin-Madison Mathematics Dept.}
\email{medini@math.wisc.edu}

\begin{abstract}
Building on work of Terada, we prove that h-homogeneity is productive in the class of zero-dimensional spaces. Then, by generalizing a result of Motorov, we show that for every non-empty zero-dimensional space $X$ there exists a non-empty zero-dimensional space $Y$ such that $X\times Y$ is h-homogeneous. Also, we simultaneously generalize results of Motorov and Terada by showing that if $X$ is a space such that the isolated points are dense then $X^\kappa$ is h-homogeneous for every infinite cardinal $\kappa$. Finally, we show that a question of Terada (whether $X^\omega$ is h-homogeneous for every zero-dimensional first-countable $X$) is equivalent to a question of Motorov (whether such an infinite power is always divisible by $2$) and give some partial answers.
\end{abstract}

\maketitle

All spaces in this paper are assumed to be Tychonoff. It is easy to see that every zero-dimensional space is Tychonoff. For all undefined topological notions, see \cite{engelking}. For all undefined Boolean algebraic notions, see \cite{koppelberg}. Recall that a subset of a space is \emph{clopen} if it is closed and open.

\begin{definition} 
A space $X$ is \emph{h-homogeneous} (or \emph{strongly homogeneous}) if all non-empty clopen subsets of $X$ are homeomorphic to each other.
\end{definition}

The Cantor set, the rationals and the irrationals are examples of h-homogeneous spaces. Every connected space is h-homogeneous. A finite space is h-homogeneous if and only if it has size at most $1$. The concept of h-homogeneity has been studied (mostly in the zero-dimensional case) by several authors: see \cite{matveev1} for an extensive list of references.

We will denote by $\CO(X)$ the Boolean algebra of the clopen subsets of $X$. Recall that a Boolean algebra $\AAA$ is \emph{homogeneous} if $\AAA\upharpoonright a$ is isomorphic to $\AAA$ for every non-zero $a\in\AAA$, where $\AAA\upharpoonright a$ denotes the \emph{relative algebra} $\{x\in\AAA :x\leq a \}$. If $X$ is h-homogeneous then $\CO(X)$ is homogeneous; the converse holds if $X$ is compact and zero-dimensional (see the remarks following Definition 9.12 in \cite{koppelberg}).

\section{The productivity of h-homogeneity}

In \cite{terada}, the productivity of h-homogeneity is stated as an open problem (see also \cite{matveev1} and \cite{matveev2}), and it is shown that the product of zero-dimensional h-homogeneous spaces is h-homogeneous provided it is compact or non-pseudocompact (see Theorem 3.3 in \cite{terada}). The following theorem, proved by Terada under the additional assumption that $X$ is zero-dimensional (see Theorem 2.4 in \cite{terada}), is the key ingredient in the proof. Recall that a collection $\BBB$ consisting of non-empty open subsets of a space $X$ is a \emph{$\pi$-base} if for every non-empty open subset $U$ of $X$ there exists $V\in\BBB$ such that $V\subseteq U$.

\begin{theorem}[Terada]\label{teradanonpseudocompact}
Assume that $X$ is non-pseudocompact. If $X$ has a $\pi$-base consisting of clopen sets that are homeomorphic to $X$ then $X$ is h-homogeneous.
\end{theorem}

The proof of Theorem \ref{teradanonpseudocompact} uses the fact that a zero-dimensional non-pseudocompact space can be written as the disjoint union of infinitely many of its non-empty clopen subsets (the converse is also true, trivially). However, that is the only consequence of zero-dimensionality that is actually used (see Appendix A). Therefore such assumption is redundant by the following lemma, whose proof we leave to the reader.

\begin{lemma}\label{dispensezerodimensionality}
Assume that $X$ is non-pseudocompact. If $X$ has a $\pi$-base consisting of clopen sets then $X$ can be written as the disjoint union of infinitely many of its non-empty clopen subsets.
\end{lemma}

Using Theorem \ref{teradanonpseudocompact} one can easily prove the following.

\begin{theorem}[Terada]\label{teradaproductive}
Assume that $X=\prod_{i\in I}X_i$ is non-pseudocompact. If $X_i$ is h-homogeneous and it has a $\pi$-base consisting of clopen sets for every $i\in I$ then $X$ is h-homogeneous.
\end{theorem}

For proofs of the following basic facts about $\beta X$, see Section 11 in \cite{vanmill1}. Given any open subset $U$ of $X$, define $\Ex(U)=\beta X\setminus\cl_{\beta X}(X\setminus U)$. It is easy to see that $\Ex(U)$ is the biggest open subset of $\beta X$ such that its intersection with $X$ is $U$. If $C$ is a clopen subset of $X$ then $\Ex(C)=\cl_{\beta X}(C)$, hence $\Ex(C)$ is clopen in $\beta X$. Furthermore, the collection $\{\Ex(U):U\textrm{ is open in }X\}$ is a base for $\beta X$.

\begin{remark}
It is not true that $\beta X$ is zero-dimensional whenever $X$ is zero-dimensional (see Example 6.2.20 in \cite{engelking} or Example 3.39 in \cite{walker}). If $\beta X$ is zero-dimensional then $X$ is called \emph{strongly zero-dimensional}.
\end{remark}

We will need the following theorem (see Theorem 8.25 in \cite{walker}); see also Exercise 3.12.20(d) in \cite{engelking}. Recall that a subspace $Y$ of $X$ is \emph{$C^\ast$-embedded} in $X$ if every bounded continuous function $f:Y\longrightarrow\RRR$ admits a continuous extension to $X$.

\begin{theorem}[Glicksberg]\label{glicksbergtheorem} Assume that $X=\prod_{i\in I}X_i$ is pseudocompact. Then $X$ is $C^\ast$-embedded in $\prod_{i\in I}\beta X_i$.
\end{theorem}

\begin{remark} The reverse implication is also true, under the additional assumption that $\prod_{j\neq i}X_j$ is infinite for every $i\in I$. Such assumption is clearly not needed in the above statement (see Proposition 8.2 in \cite{walker}).
\end{remark}

\begin{proposition}\label{clopenpseudocompact}
Assume that $X\times Y$ is pseudocompact. If $C$ is a clopen subset of $X\times Y$ then $C$ can be written as the union of finitely many open rectangles.
\end{proposition}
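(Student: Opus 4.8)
The plan is to transport the problem up to the \v{C}ech--Stone compactification, where a clopen set becomes compact and a routine covering argument finishes the job. The essential external input is Glicksberg's theorem: pseudocompactness of $X\times Y$ is precisely what guarantees that the canonical map identifies $\beta(X\times Y)$ with the product $\beta X\times\beta Y$ (we only need this direction, which moreover holds for the infinite spaces of this section). Granting the identification $\beta(X\times Y)=\beta X\times\beta Y$, the ambient space is now a product of two compacta, hence compact, and its topology has as a base the open rectangles $A\times B$ with $A$ open in $\beta X$ and $B$ open in $\beta Y$.

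First I would pass to $\Ex(C)$. Because $C$ is clopen in $X\times Y$, the facts recalled before the statement give that $\Ex(C)=\cl_{\beta(X\times Y)}(C)$ is clopen in $\beta(X\times Y)=\beta X\times\beta Y$ and that $\Ex(C)\cap(X\times Y)=C$. As a closed subset of a compact space, $\Ex(C)$ is compact; as an open subset of $\beta X\times\beta Y$, it is the union of the basic open rectangles $A\times B\subseteq\Ex(C)$ that it contains. Choosing one such rectangle through each of its points and extracting a finite subcover by compactness, I obtain finitely many rectangles $A_1\times B_1,\dots,A_n\times B_n$, each contained in $\Ex(C)$, whose union is exactly $\Ex(C)$. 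Intersecting with $X\times Y$ and using $\Ex(C)\cap(X\times Y)=C$ together with $(A_i\times B_i)\cap(X\times Y)=(A_i\cap X)\times(B_i\cap Y)$, I get $C=\bigcup_{i=1}^{n}(A_i\cap X)\times(B_i\cap Y)$, a finite union of open rectangles in $X\times Y$.

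The only substantive obstacle is the identification $\beta(X\times Y)=\beta X\times\beta Y$; once Glicksberg's theorem is invoked, the remainder is a standard compactness argument with the $\Ex$ operator. It is worth noting where each hypothesis is used: clopenness of $C$ is what makes $\Ex(C)$ closed (and hence compact, which is where finiteness comes from) and also what secures $\Ex(C)\cap(X\times Y)=C$, while pseudocompactness enters only through Glicksberg's theorem. Mere openness of $C$ would give a rectangular cover but not a finite one.
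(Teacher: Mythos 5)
Your proof is correct and follows essentially the same route as the paper: invoke Glicksberg's theorem to identify $\beta(X\times Y)$ with $\beta X\times\beta Y$, note that $\Ex(C)$ is clopen (hence compact and open) there, extract a finite cover by basic open rectangles, and intersect back down to $X\times Y$. The only cosmetic difference is that the paper covers $\Ex(C)$ by rectangles of the special form $\Ex(U)\times\Ex(V)$ so that the traces on $X\times Y$ are literally $U\times V$, whereas you use arbitrary basic rectangles and take $(A_i\cap X)\times(B_i\cap Y)$; both yield the same conclusion.
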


\begin{proof}
It follows from Theorem \ref{glicksbergtheorem} that $X\times Y$ is $C^\ast$-embedded in $\beta X\times\beta Y$. By the universal property of the \v{C}ech-Stone compactification (see Corollary 3.6.3 in \cite{engelking}), there exists a homeomorphism $h:\beta X\times\beta Y\longrightarrow\beta(X\times Y)$ such that $h(x,y)=(x,y)$ whenever $(x,y)\in X\times Y$.

Let $C$ be a clopen subset of $X\times Y$. Since $\{\Ex(U):U\textrm{ is open in }X\}$ is a base for $\beta X$ and $\{\Ex(V):V\textrm{ is open in }Y\}$ is a base for $\beta Y$, the collection
$$
\BBB=\{\Ex(U)\times\Ex(V):U\textrm{ is open in }X\textrm{ and }V\textrm{ is open in }Y\}
$$
is a base for $\beta X\times\beta Y$. Therefore $\{h[B]:B\in\BBB\}$ is a base for $\beta(X\times Y)$, hence we can write $\Ex(C)=h[B_1]\cup\cdots\cup h[B_n]$ for some $B_1,\ldots, B_n\in\BBB$ by compactness.

Finally, if we let $B_i=\Ex(U_i)\times\Ex(V_i)$ for each $i$, we get
\begin{eqnarray}\nonumber
C & = & \Ex(C)\cap (X\times Y) \\\nonumber
& = & \left(h[B_1]\cup\cdots\cup h[B_n]\right)\cap h[X\times Y]\\\nonumber
& = & h[B_1\cap (X\times Y)]\cup\cdots\cup h[B_n\cap (X\times Y)]\\\nonumber
& = & (B_1\cap (X\times Y))\cup\cdots\cup (B_n\cap (X\times Y))\\\nonumber
& = & (U_1\times V_1)\cup\cdots\cup (U_n\times V_n),
\end{eqnarray}
that concludes the proof.
\end{proof}

\begin{lemma}\label{rectangles}
Assume that $C$ is a clopen subset of $X\times Y$ that can be written as the union of finitely many rectangles. Then $C$ can be written as the union of finitely many pairwise disjoint clopen rectangles.
\end{lemma}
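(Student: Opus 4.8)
The plan is to exploit the fact that every slice of a clopen set is clopen, together with the finiteness coming from the rectangular representation of $C$. Write $C=\bigcup_{i=1}^n U_i\times V_i$. For $y\in Y$ let $C^y=\{x\in X:(x,y)\in C\}$ be the vertical slice, and for $x\in X$ let $C_x=\{y\in Y:(x,y)\in C\}$ be the horizontal slice. Since $C$ is clopen and the maps $x\mapsto(x,y)$ and $y\mapsto(x,y)$ are continuous, each $C^y$ is clopen in $X$ and each $C_x$ is clopen in $Y$. Moreover $C^y=\bigcup\{U_i:y\in V_i\}$, so $C^y$ depends only on the set $\{i:y\in V_i\}\subseteq\{1,\dots,n\}$; hence there are at most $2^n$ distinct vertical slices, and symmetrically only finitely many distinct horizontal slices.

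First I would let $\mathcal{A}$ be the (finite) Boolean subalgebra of $\CO(X)$ generated by the finitely many vertical slices $C^y$, and let $P_1,\dots,P_r$ be its atoms; these are clopen and partition $X$. Similarly I would let $Q_1,\dots,Q_s$ be the atoms of the finite Boolean subalgebra of $\CO(Y)$ generated by the horizontal slices $C_x$; they are clopen and partition $Y$. The products $P_j\times Q_k$ are then pairwise disjoint clopen rectangles covering $X\times Y$, so it remains only to check that $C$ is exactly the union of those cells that it meets.

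The key step is to verify that each cell is either contained in $C$ or disjoint from it. Suppose $(x,y)\in C$, and let $x'$ lie in the same atom $P_j$ as $x$ and $y'$ in the same atom $Q_k$ as $y$. Since every vertical slice is a union of atoms of $\mathcal{A}$ and $x,x'$ lie in the same atom, we have $x\in C^{y''}\iff x'\in C^{y''}$ for every $y''\in Y$; equivalently $C_x=C_{x'}$. Likewise $C^y=C^{y'}$. Therefore $(x,y)\in C$ gives $y\in C_x=C_{x'}$, so $(x',y)\in C$, whence $x'\in C^y=C^{y'}$, so $(x',y')\in C$. Thus $P_j\times Q_k\subseteq C$, and consequently $C=\bigcup\{P_j\times Q_k:(P_j\times Q_k)\cap C\neq\emptyset\}$, a union of finitely many pairwise disjoint clopen rectangles.

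The one point to watch — and the reason the hypothesis that $C$ is clopen is essential — is that the naive approach of taking the atoms of the algebra generated by the open sets $U_i$ (and likewise the $V_i$) does produce a disjoint rectangular decomposition, but its pieces $\bigcap_{i\in S}U_i\cap\bigcap_{i\notin S}(X\setminus U_i)$ are only locally closed, not clopen. Passing instead to the Boolean algebra generated by the slices of $C$ is precisely what converts clopenness of $C$ into clopenness of the factors. The remaining verifications (that the generated algebras are finite, that their atoms partition the space, and the membership chase above) are routine.
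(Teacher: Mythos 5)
Your proof is correct and follows essentially the same route as the paper's: pass to the finite Boolean subalgebras of $\CO(X)$ and $\CO(Y)$ generated by the cross-sections of $C$, and show that $C$ is a union of products of their atoms. You supply two details the paper leaves implicit — the finiteness of the family of distinct cross-sections (hence of the generated algebras) and the membership chase showing each cell $P_j\times Q_k$ is contained in or disjoint from $C$ — both of which check out.
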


\begin{proof}
For every $x\in X$, let $C_x=\{y\in Y:(x,y)\in C\}$ be the corresponding vertical cross-section. For every $y\in Y$, let $C^y=\{x\in X:(x,y)\in C\}$ be the corresponding horizontal cross-section. Since $C$ is clopen, each cross-section is clopen.

Let $\AAA$ be the Boolean subalgebra of $\CO(X)$ generated by $\{C^y:y\in Y\}$. Since $\AAA$ is finite, it must be atomic. Let $P_1,\ldots, P_m$ be the atoms of $\AAA$. Similarly, let $\BBB$ be the Boolean subalgebra of $\CO(Y)$ generated by $\{C_x:x\in X\}$, and let $Q_1,\ldots, Q_n$ be the atoms of $\BBB$.

Observe that the rectangles $P_i\times Q_j$ are clopen and pairwise disjoint. Furthermore, given any $i,j$, either $P_i\times Q_j\subseteq C$ or $(P_i\times Q_j)\cap C=\varnothing$. Hence $C$ is the union of a (finite) collection of such rectangles.
\end{proof}

\begin{proposition}\label{twoproductive}
Assume that $X\times Y$ is pseudocompact. If $X$ is h-homogeneous and $Y$ is h-homogeneous then $X\times Y$ is h-homogeneous.
\end{proposition}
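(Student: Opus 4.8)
The plan is to show that every non-empty clopen subset $C\subseteq X\times Y$ is homeomorphic to $X\times Y$ itself; since $X\times Y$ is a clopen subset of itself, transitivity of $\cong$ then yields that any two non-empty clopen subsets are homeomorphic, which is precisely h-homogeneity.

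First I would apply Proposition \ref{clopenpseudocompact} (this is where pseudocompactness of $X\times Y$ is used) to write $C$ as a union of finitely many open rectangles, and then Lemma \ref{rectangles} to upgrade this to a representation $C=\bigsqcup_{i=1}^k(P_i\times Q_i)$ as a finite disjoint union of clopen rectangles. Discarding empty rectangles, I may assume each $P_i$ and each $Q_i$ is non-empty, and since $C$ is non-empty we have $k\geq 1$. Because $X$ and $Y$ are h-homogeneous, each non-empty clopen $P_i\cong X$ and each non-empty clopen $Q_i\cong Y$, so $P_i\times Q_i\cong X\times Y$. Hence $C$ is homeomorphic to the topological sum $\bigsqcup_{i=1}^k(X\times Y)$ of $k$ copies of $X\times Y$.

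It therefore remains to prove that $\bigsqcup_{i=1}^k(X\times Y)\cong X\times Y$ for every $k\geq 1$, and here a dichotomy on connectedness does the work. If $X\times Y$ is connected it has no proper non-empty clopen subset, so it is h-homogeneous trivially and there is nothing left to prove. Otherwise $X\times Y$ is disconnected; since a product of connected spaces is connected, at least one factor, say $X$, must be disconnected, so it has a proper non-empty clopen subset $A$. Then both $A$ and $X\setminus A$ are non-empty clopen, hence each is homeomorphic to $X$, giving $X\cong X\sqcup X$; iterating, $X\cong\bigsqcup_{i=1}^k X$ for every $k$. Multiplying by $Y$ yields $X\times Y\cong\bigsqcup_{i=1}^k(X\times Y)$, which combined with the previous step gives $C\cong X\times Y$.

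The genuinely substantive input is already packaged in Proposition \ref{clopenpseudocompact} and Lemma \ref{rectangles}: pseudocompactness is what forces a clopen set to be a finite union of rectangles, and without it the clopen algebra of $X\times Y$ could be far more complicated. Granting those, the main point to get right is the connectedness dichotomy — in particular noticing that the connected case must be separated out (there the rectangle decomposition is trivial and the self-similarity argument is vacuous), and that disconnectedness of the product is inherited by a factor, which is exactly what lets the h-homogeneity of $X$ (or $Y$) split the product into arbitrarily many clopen copies of itself.
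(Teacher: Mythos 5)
Your proof is correct and follows essentially the same route as the paper's: reduce a non-empty clopen $C$ to a finite disjoint union of clopen rectangles via Proposition \ref{clopenpseudocompact} and Lemma \ref{rectangles}, identify each rectangle with $X\times Y$, and absorb the finitely many copies using the connectedness dichotomy and the fact that a disconnected h-homogeneous factor satisfies $X\cong n\times X$. The only difference is cosmetic (the paper dispatches the connected case and establishes $X\times Y\cong n\times X\times Y$ up front, before decomposing $C$).
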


\begin{proof}
Assume that $X$ and $Y$ are h-homogeneous. If $X$ and $Y$ are both connected then $X\times Y$ is connected. So assume without loss of generality that $X$ is not connected. Since $X$ is also h-homogeneous, it follows that $X\cong n\times X$ whenever $1\leq n<\omega$. Therefore $X\times Y\cong n\times X\times Y$ whenever $1\leq n<\omega$.

Now let $C$ be a non-empty clopen subset of $X\times Y$. By Proposition \ref{clopenpseudocompact} and Lemma \ref{rectangles}, we can write $C$ as the disjoint union of finitely many, say $n$, non-empty clopen rectangles. By the h-homogeneity of $X$ and $Y$, each such rectangle is homeomorphic to $X\times Y$. Therefore $C\cong n\times X\times Y\cong X\times Y$.
\end{proof}

\begin{corollary}\label{finitelyproductive}
Assume that $X=X_1\times \cdots \times X_n$ is pseudocompact. If each $X_i$ is h-homogeneous then $X$ is h-homogeneous.
\end{corollary}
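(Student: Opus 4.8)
The plan is to prove this by induction on $n$, using Proposition \ref{twoproductive} as the inductive engine. The base case $n=1$ is immediate, since then $X=X_1$ is h-homogeneous by hypothesis (and pseudocompactness plays no role). The case $n=2$ is precisely Proposition \ref{twoproductive}. So the content lies in the inductive step.

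For the inductive step, I would assume the statement holds for products of $n$ factors and consider a pseudocompact product $X=X_1\times\cdots\times X_{n+1}$ with each $X_i$ h-homogeneous. The natural move is to split off the last coordinate, writing $X=X'\times X_{n+1}$ where $X'=X_1\times\cdots\times X_n$, and then apply Proposition \ref{twoproductive} to this two-factor decomposition. To do so I need three things: that $X'\times X_{n+1}$ is pseudocompact (which is just the hypothesis that $X$ is pseudocompact), that $X_{n+1}$ is h-homogeneous (given), and that $X'$ is h-homogeneous. The last of these I would obtain from the inductive hypothesis applied to $X'$.

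The subtle point — and the step I expect to be the main obstacle — is verifying that $X'$ itself is pseudocompact, which is required before the inductive hypothesis can be invoked on it. Pseudocompactness is not inherited by arbitrary subspaces, so this is not automatic. However, it is preserved under continuous surjections: if $f\colon A\to B$ is a continuous surjection and $A$ is pseudocompact, then any continuous $g\colon B\to\RRR$ pulls back to the bounded function $g\circ f$ on $A$, forcing $g$ to be bounded on $B$. The projection $\pi\colon X\to X'$ is a continuous surjection, so $X'$, being a continuous image of the pseudocompact space $X$, is pseudocompact. Once this is in place, the inductive hypothesis (a product of $n$ h-homogeneous factors that is pseudocompact) gives that $X'$ is h-homogeneous, and Proposition \ref{twoproductive} then yields that $X=X'\times X_{n+1}$ is h-homogeneous, completing the induction.

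In short, the entire argument reduces to iterating Proposition \ref{twoproductive}, and the only genuinely new observation needed is that pseudocompactness descends from the full product to each finite subproduct via the coordinate projections.
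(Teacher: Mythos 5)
Your proof is correct and is exactly the induction the paper intends (the corollary is stated without proof as an immediate consequence of Proposition \ref{twoproductive}). Your one substantive addition --- that $X_1\times\cdots\times X_n$ is pseudocompact because it is the image of the pseudocompact space $X$ under the continuous surjective projection --- is precisely the detail needed to make the induction legitimate, and your argument for it is right.
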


An obvious modification of the proof of Proposition \ref{clopenpseudocompact} yields the following.

\begin{proposition} Assume that $X=\prod_{i\in I}X_i$ is pseudocompact. If $C$ is a clopen subset of $X$ then $C$ can be written as the union of finitely many open rectangles.
\end{proposition}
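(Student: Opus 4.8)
The plan is to follow the proof of proposition \ref{clopenpseudocompact} almost verbatim, the one new ingredient being the general form of Glicksberg's theorem. Recall that Glicksberg's theorem (theorem 1 in \cite{glicksberg}) applies to arbitrary products: if $X=\prod_{i\in I}X_i$ is pseudocompact then it is $C^\ast$-embedded in $\prod_{i\in I}\beta X_i$. As in proposition \ref{clopenpseudocompact}, the universal property of the Stone-\v{C}ech compactification then yields a homeomorphism $h:\prod_{i\in I}\beta X_i\longrightarrow\beta X$ such that $h(x)=x$ for every $x\in X$.

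Next I would build the analogue of the base $\BBB$. For each $i$ the collection $\{\Ex(U):U\text{ is open in }X_i\}$ is a base for $\beta X_i$, and $\Ex(X_i)=\beta X_i$; hence
$$
\BBB=\left\{\prod_{i\in I}\Ex(U_i):U_i\text{ is open in }X_i\text{ and }U_i=X_i\text{ for all but finitely many }i\right\}
$$
is a base for $\prod_{i\in I}\beta X_i$, so that $\{h[B]:B\in\BBB\}$ is a base for $\beta X$. Since $\Ex(C)=\cl_{\beta X}(C)$ is clopen, it is a compact open subset of $\beta X$; covering it by the basic open sets it contains and extracting a finite subcover gives $\Ex(C)=h[B_1]\cup\cdots\cup h[B_n]$ for some $B_1,\ldots,B_n\in\BBB$.

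Finally, writing $B_k=\prod_{i\in I}\Ex(U_i^{(k)})$ and intersecting with $X$, the computation of proposition \ref{clopenpseudocompact} carries over unchanged (using that $h$ is a bijection fixing $X$ pointwise, and that $\Ex(U_i^{(k)})\cap X_i=U_i^{(k)}$):
$$
C=\Ex(C)\cap X=\bigcup_{k=1}^{n}\left(B_k\cap X\right)=\bigcup_{k=1}^{n}\prod_{i\in I}U_i^{(k)}.
$$
Since in each term all but finitely many of the $U_i^{(k)}$ equal $X_i$, every $\prod_{i\in I}U_i^{(k)}$ is a (basic) open rectangle, which completes the argument.

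I expect no real obstacle beyond confirming that Glicksberg's theorem is indeed available for infinite families and not only for two factors; once that is granted the proof is purely formal. The two small points that make the reduction work are that $\Ex(X_i)=\beta X_i$ (so that the condition ``$U_i=X_i$ for all but finitely many $i$'' is inherited by the members of $\BBB$) and that a basic open set of the product $\prod_{i\in I}X_i$ depends on only finitely many coordinates, which is exactly the meaning of ``open rectangle'' in this setting.
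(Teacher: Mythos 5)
Your proof is correct and is precisely the ``obvious modification'' of Proposition \ref{clopenpseudocompact} that the paper has in mind: Glicksberg's theorem in its general form gives the $C^\ast$-embedding of $X$ in $\prod_{i\in I}\beta X_i$, and the rest is the same base-plus-compactness argument, with the finite-support condition on the $U_i$ guaranteeing that each resulting rectangle is a basic open set. No gaps.
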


\begin{corollary}\label{finitelymanycoordinates}
Assume that $X=\prod_{i\in I}X_i$ is pseudocompact. If $C$ is a clopen subset of $X$ then $C$ depends on finitely many coordinates.
\end{corollary}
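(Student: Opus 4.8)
The plan is to read the conclusion off the preceding proposition almost immediately, so that essentially all the content lies in unpacking what an ``open rectangle'' means in the infinite-product setting. Recall first what is to be proved: a set $C\subseteq\prod_{i\in I}X_i$ \emph{depends on finitely many coordinates} if there is a finite $F\subseteq I$ such that whenever $x\in C$ and $y\in\prod_{i\in I}X_i$ satisfies $y_i=x_i$ for all $i\in F$, then $y\in C$; equivalently $C=\pi_F^{-1}[\pi_F[C]]$, where $\pi_F$ denotes the projection onto $\prod_{i\in F}X_i$.

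First I would make explicit the notion of open rectangle that emerges from the infinite-product analogue of \ref{clopenpseudocompact}. There the relevant base for $\prod_{i\in I}\beta X_i$ consists of products $\prod_{i\in I}\Ex(U_i)$ in which $\Ex(U_i)=\beta X_i$ — equivalently $U_i=X_i$ — for all but finitely many $i$. Intersecting such a basic set with $X$ yields a set of the form $\prod_{i\in I}U_i$ with each $U_i$ open in $X_i$ and $U_i=X_i$ off a finite set. Thus every open rectangle $R$ produced by the proposition has a finite support $\{i:U_i\neq X_i\}$, and $R$ manifestly depends only on the coordinates in that support.

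Then, applying the preceding proposition, I would write $C=R_1\cup\cdots\cup R_n$ with each $R_k$ an open rectangle of finite support $F_k$, and put $F=F_1\cup\cdots\cup F_n$, which is again finite. It remains to verify that $C$ depends on the coordinates in $F$. Suppose $x\in C$ and $y_i=x_i$ for all $i\in F$; choose $k$ with $x\in R_k$. Since $F_k\subseteq F$, the points $y$ and $x$ agree on $F_k$, and $R_k$ depends only on $F_k$, so $y\in R_k\subseteq C$. Hence $C=\pi_F^{-1}[\pi_F[C]]$, as required.

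The only genuine point of care is the first step: one must confirm that the rectangles delivered by the proposition really have finite support, rather than being arbitrary products of open sets over all of $I$. This is automatic from the shape of the product base on $\prod_{i\in I}\beta X_i$, whose basic open sets are restricted in only finitely many coordinates, so no compactness argument beyond the one already inside the proposition is needed. Everything after that is the one-line combinatorial observation that a finite union of sets each depending on coordinates in $F$ again depends on coordinates in $F$.
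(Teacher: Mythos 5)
Your proposal is correct and follows exactly the route the paper intends: the corollary is stated without proof precisely because the open rectangles produced by the preceding proposition arise from basic open sets of $\prod_{i\in I}\beta X_i$, hence have finite support, and a finite union of such rectangles depends on the (finite) union of their supports. Your explicit identification of the point of care --- that the rectangles are not arbitrary products over all of $I$ --- is the right one, and the rest is the routine verification you give.
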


\begin{remark}
The zero-dimensional case of Corollary \ref{finitelymanycoordinates} is a trivial consequence of a result by Broverman (see Theorem 2.6 in \cite{broverman}).
\end{remark}

\begin{theorem}\label{productivepseudocompact} Assume that $X=\prod_{i\in I}X_i$ is pseudocompact. If $X_i$ is h-homogeneous for every $i\in I$ then $X$ is h-homogeneous.
\end{theorem}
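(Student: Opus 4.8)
The plan is to reduce the arbitrary product to the finite case already handled in corollary \ref{finitelyproductive}, exploiting the fact that clopen subsets of a pseudocompact product depend on only finitely many coordinates. So I would fix a non-empty clopen set $C\subseteq X$ and aim to produce a homeomorphism $C\cong X$.

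First I would apply corollary \ref{finitelymanycoordinates} to find a finite set $F\subseteq I$ on which $C$ depends, meaning that membership in $C$ is determined by the coordinates indexed by $F$. Writing $X_F=\prod_{i\in F}X_i$ and letting $\pi_F\colon X\to X_F$ be the projection, this says precisely that $C=C'\times\prod_{i\in I\setminus F}X_i$, where $C'=\pi_F[C]$. Since $C$ is clopen and depends only on the coordinates in $F$, the set $C'$ is a non-empty clopen subset of $X_F$.

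Next I would observe that $X_F$ is itself pseudocompact: the projection $\pi_F$ is a continuous surjection (every factor is non-empty, being infinite), and pseudocompactness is preserved under continuous images, so $X_F$ inherits it from $X$. Now $X_F$ is a finite product of h-homogeneous spaces which is pseudocompact, so corollary \ref{finitelyproductive} applies and $X_F$ is h-homogeneous. Hence the non-empty clopen set $C'$ is homeomorphic to $X_F$.

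Finally, this homeomorphism yields
\[
C=C'\times\prod_{i\in I\setminus F}X_i\cong X_F\times\prod_{i\in I\setminus F}X_i=X,
\]
which is what we want. (When $F=\varnothing$ the set $C$ depends on no coordinates, so $C=X$ directly.) I do not anticipate a serious obstacle here, since the substantive work has already been carried out in proposition \ref{clopenpseudocompact} and corollaries \ref{finitelymanycoordinates} and \ref{finitelyproductive}; the only points needing a little care are the rewriting of a clopen set depending on $F$ as the honest rectangle $C'\times\prod_{i\in I\setminus F}X_i$ and the remark that the finite subproduct $X_F$ is again pseudocompact.
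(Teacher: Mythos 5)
Your proposal is correct and follows essentially the same route as the paper: apply Corollary \ref{finitelymanycoordinates} to write $C$ as $C'\times\prod_{i\in I\setminus F}X_i$ for a finite $F$, then invoke Corollary \ref{finitelyproductive} to conclude $C'\cong\prod_{i\in F}X_i$. Your extra remark that the finite subproduct is pseudocompact (as a continuous image of $X$) is a detail the paper leaves implicit, but it is the right justification for applying that corollary.
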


\begin{proof}
Assume that each $X_i$ is h-homogeneous. Let $C$ be a non-empty clopen subset of $X$. By Corollary \ref{finitelymanycoordinates}, there exists a finite subset $F$ of $I$ such that $C$ is homeomorphic to $D\times \prod_{i\in I\setminus F}X_i$, where $D$ is a non-empty clopen subset of $\prod_{i\in F}X_i$. But $\prod_{i\in F}X_i$ is h-homogeneous by Corollary \ref{finitelyproductive}, so $D\cong\prod_{i\in F}X_i$. Hence $C\cong X$.
\end{proof}

\begin{theorem}\label{productive}
If $X_i$ is h-homogeneous and it has a $\pi$-base consisting of clopen sets for every $i\in I$ then $X=\prod_{i\in I}X_i$ is h-homogeneous.
\end{theorem}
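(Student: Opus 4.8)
The plan is to prove the theorem by a dichotomy on pseudocompactness, combining the two preceding results. Every space is either pseudocompact or non-pseudocompact, so it suffices to handle these two cases separately and observe that in each one a result already established applies.

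First I would treat the pseudocompact case. If $X=\prod_{i\in I}X_i$ is pseudocompact, then since every $X_i$ is h-homogeneous, Theorem \ref{productivepseudocompact} directly yields that $X$ is h-homogeneous. Notice that in this case the zero-dimensionality hypothesis is not even needed; it is pseudocompactness that forces clopen sets to depend on only finitely many coordinates (Corollary \ref{finitelymanycoordinates}), and that is what drives the argument. If instead $X$ is not pseudocompact, then it is non-pseudocompact, and now both the h-homogeneity and the zero-dimensionality of each $X_i$ are available, so Terada's Theorem \ref{teradaproductive} applies and again gives that $X$ is h-homogeneous. Since the two cases are exhaustive, this finishes the proof.

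I do not expect a genuine obstacle at this stage, since all the difficulty has been absorbed into establishing Theorem \ref{productivepseudocompact}, whose purpose is precisely to fill the pseudocompact gap left open by Terada's theorem. The only point requiring a moment's care is verifying that the hypotheses line up: Theorem \ref{productivepseudocompact} requires only h-homogeneity (not zero-dimensionality) of the factors, while Terada's theorem requires both, and since the present theorem assumes both properties for every $X_i$, each earlier result is legitimately applicable in its respective case.
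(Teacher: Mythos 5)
Your proposal is correct and is exactly the paper's argument: split on whether $X$ is pseudocompact, applying Theorem \ref{productivepseudocompact} in the pseudocompact case and Terada's Theorem \ref{teradaproductive} in the non-pseudocompact case. The hypothesis check you mention is also the right one, and nothing further is needed.
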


\begin{proof}
If $X$ is pseudocompact, apply Theorem \ref{productivepseudocompact}; if $X$ is non-pseudocompact, apply Theorem \ref{teradaproductive}.
\end{proof}

\begin{corollary}\label{productivezerodim}
If $X_i$ is h-homogeneous and zero-dimensional for every $i\in I$ then $\prod_{i\in I}X_i$ is h-homogeneous.
\end{corollary}

\begin{question}
Can the zero-dimensionality requirement be dropped in Corollary \ref{productivezerodim}?
\end{question}

\section{Some applications}

The compact case of the following result was essentially proved by Motorov (see Theorem 0.2(9) in \cite{motorov2} and Theorem 2 in \cite{motorov1}).

\begin{theorem}\label{motorovpibase}
Assume that $X$ has a $\pi$-base $\BBB$ consisting of clopen sets. Then $Y=(X\times 2\times \prod\BBB)^\kappa$ is h-homogeneous for every infinite cardinal $\kappa$.
\end{theorem}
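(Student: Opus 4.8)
The plan is to prove directly that every non-empty clopen $C\subseteq X$ is homeomorphic to $X$. Write $Z=Y\times 2\times\prod\BBB$ and $D=\prod\BBB=\prod_{B\in\BBB}B$, so that $X=Z^\kappa\cong Y^\kappa\times 2^\kappa\times D^\kappa$, and note that $D^\kappa\cong\prod_{(B,\alpha)\in\BBB\times\kappa}B$ contains $\kappa$ copies of each factor $B\in\BBB$. From this rearrangement I would first record the basic self-absorption properties of $X$: since $\kappa$ is infinite, $X\cong X^\kappa\cong X^\omega$; since $Z$ has the factor $2$, the space $X$ is divisible, i.e. $X\cong n\times X$ for every $n\in\omega\setminus\{0\}$ (as $X\cong 2\times X$ gives $X\cong X\sqcup X$, whence $X\cong n\times X$ by induction); absorbing one coordinate gives $X\cong Y\times X$; and, crucially, for every $B\in\BBB$ we have $X\cong B\times X$, because $B$ occurs $\kappa$ times as a factor in $D^\kappa$ and $\kappa+1=\kappa$.

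The core of the argument is a reindexing observation: every basic clopen subset of $X$ is homeomorphic to $X$. Such a set is obtained by restricting finitely many coordinates $\alpha\in F$ to basic clopen subsets $V_\alpha$ of $Z$; modulo the point coming from the $2$ factor, each $V_\alpha$ is a finite product of members of $\BBB$ times a copy of $D$ with finitely many factors deleted. Hence the whole set is homeomorphic to a finite product of members of $\BBB$, times finitely many $B$-factors, times $Z^{\kappa\setminus F}\cong X$. Since every distinguished factor lies in $\BBB$ and therefore already occurs $\kappa$ times inside $D^\kappa$, all of them are absorbed, and the set is homeomorphic to $X$. Thus the basic clopen sets form a $\pi$-base of $X$ consisting of clopen copies of $X$.

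From here I would split into two cases. If $X$ is non-pseudocompact, the previous paragraph places us exactly in the hypotheses of Theorem~\ref{teradanonpseudocompact}, which immediately yields h-homogeneity. If $X$ is pseudocompact, then by Corollary~\ref{finitelymanycoordinates} any clopen $C$ depends on finitely many coordinates, so $C\cong C'\times X$ for some clopen $C'\subseteq Z^n$; decomposing $C'$ into finitely many pairwise disjoint clopen rectangles by the finite-product version of Proposition~\ref{clopenpseudocompact} together with Lemma~\ref{rectangles}, and using divisibility $X\cong m\times X$ to collapse the disjoint union, the problem reduces to showing $R\times X\cong X$ for a single clopen rectangle $R\subseteq Z^n$. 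Iterating over the sides of $R$ and over the three factors of $Z$, everything comes down to one absorption statement: for every non-empty clopen $A\subseteq Y$ one has $A\times X\cong X$ (the analogous statement for clopen subsets of $D$ reduces to this one in the same way, since such a clopen set is, up to finitely many $D$-factors absorbed by $D^\kappa$, a product of clopen subsets of members of $\BBB\subseteq\CO(Y)$).

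This last absorption is the step I expect to be the main obstacle. For members of $\BBB$ it is free, but an arbitrary clopen $A\subseteq Y$ need not lie in $\BBB$, so bare reindexing does not apply. What is available is that $A\times X$ and $X$ are each homeomorphic to a clopen subspace of the other: on one side $A\times X\subseteq Y\times X\cong X$ is clopen, and on the other, choosing $B_0\in\BBB$ with $B_0\subseteq A$ (possible since $\BBB$ is a $\pi$-base), the copy $B_0\times X\cong X$ sits as a clopen subspace of $A\times X$. Moreover both $A\times X$ and $X$ are idempotent in the two relevant senses, namely $W\cong W\sqcup W$ and $W\times X\cong W$. The plan is to upgrade these mutual clopen embeddings to a homeomorphism by a Cantor--Bernstein argument. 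The delicate point is that the naive swindle, which absorbs the error terms into a countable disjoint union of copies, destroys pseudocompactness and is therefore illegitimate in precisely the case at hand; so the homeomorphism must instead be built directly, by a back-and-forth (Knaster--Reichbach style) construction that matches clopen pieces across the infinitely many coordinates while staying inside the pseudocompact category. Making this construction close is, I expect, the heart of the proof.
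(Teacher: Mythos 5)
Your setup, the absorption facts ($X\cong n\times X$, $X\cong Y\times X$, $X\cong B\times X$ for $B\in\BBB$), the observation that the basic clopen sets give a $\pi$-base of clopen copies of $X$, and the non-pseudocompact half (quote Theorem \ref{teradanonpseudocompact}) are all correct and agree with the paper, which likewise splits on pseudocompactness and in the pseudocompact case merely notes that Motorov's argument survives once clopen sets depend on finitely many coordinates (Corollary \ref{finitelymanycoordinates}). The problem is that your pseudocompact case is not finished: you reduce everything to the absorption $A\times X\cong X$ for an arbitrary non-empty clopen $A\subseteq Y$, correctly note that the countable swindle is unavailable because an infinite disjoint sum destroys pseudocompactness, and then defer the remaining work to a Knaster--Reichbach-style back-and-forth ``staying inside the pseudocompact category.'' That step is, by your own account, the heart of the argument, and it is not carried out; as described it is also the wrong tool, since nothing in the outline indicates what pieces such a matching would manipulate or why it would close.

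The gap is real but is filled by a purely formal observation for which you already have every ingredient. In the pseudocompact case a non-empty clopen $C\subseteq X$ is a finite disjoint union of clopen rectangles, each of the form $P\times Z^{\kappa\setminus F}\cong P\times X$ with $F$ finite; since $X\cong 2\times X$, each such rectangle $R$ satisfies $R\cong R\oplus R$, hence $C\cong C\oplus C$. Now write $X\cong C\oplus D$ with $D=X\setminus C$, and $C\cong X'\oplus E$ where $X'\cong X$ is a basic clopen copy of $X$ inside $C$ supplied by your $\pi$-base. Then $C\cong X\oplus X\oplus E\cong X\oplus C$, and $X\cong C\oplus C\oplus D\cong C\oplus X$, whence $C\cong X\oplus C\cong X$. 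No back-and-forth and no infinite sums are needed; the same two lines settle your reduced statement $A\times X\cong X$ directly, using the mutual clopen embeddings you exhibited (via $B_0\in\BBB$ with $B_0\subseteq A$) together with $A\times X\cong (A\times X)\oplus(A\times X)$ and $X\cong X\oplus X$.
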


\begin{proof}
One can easily check that $Y$ has a $\pi$-base consisting of clopen sets that are homeomorphic to $Y$. Therefore, if $Y$ is non-pseudocompact, the result follows from Theorem \ref{teradanonpseudocompact}.

On the other hand, an analysis of Motorov's proof shows that the only consequence of the compactness of $Y$ that is used is the fact that clopen sets in $Y$ depend on finitely many coordinates. Therefore the same proof works if $Y$ is pseudocompact by Corollary \ref{finitelymanycoordinates}. We reproduce such proof for the convenience of the reader.

Assume that $Y$ is pseudocompact and let $C$ be a non-empty clopen subset of $Y$. The fact that $C$ depends on finitely many coordinates implies that $C\cong Y\times C$. So it will be enough to show that $Y\times C\cong Y$.

Let $B$ be a clopen subset of $C$ that is homeomorphic to $Y$. Let $D=C\setminus B$ and $E=(Y\setminus C)\oplus B$. Observe that $Y\cong Y^2\cong (Y\times D)\oplus (Y\times E)$ and that $Y\oplus Y\cong 2\times Y\cong Y$. So
\begin{eqnarray}\nonumber
Y\times C & \cong & (Y\times D)\oplus (Y\times B)\\\nonumber
& \cong & (Y\times D)\oplus Y^2\\\nonumber
& \cong & (Y\times D)\oplus \left((Y\times D)\oplus (Y\times E)\right)\\\nonumber
& \cong & \left((Y\oplus Y)\times D\right)\oplus (Y\times E)\\\nonumber
& \cong & (Y\times D)\oplus (Y\times E)\\\nonumber
& \cong & Y,
\end{eqnarray}
that concludes the proof.
\end{proof}

\begin{remark}
In \cite{motorov1} and \cite{motorov2}, Motorov asked whether the $2$ can be dropped in the definition of $Y$. This is certainly true if $Y$ is non-pseudocompact, but we do not know the answer in general. Observe that if the answer were `yes' then Theorem \ref{denseisolated} would become an immediate corollary of Theorem \ref{motorovpibase}.
\end{remark}

\begin{corollary}\label{huspenskii}
For every non-empty zero-dimensional space $X$ there exists a non-empty zero-dimensional space $Y$ such that $X\times Y$ is h-homogeneous. Furthermore, if $X$ is compact, then $Y$ can be chosen to be compact.
\end{corollary}

\begin{question}
Is it true that for every non-empty space $X$ there exists a non-empty space $Y$ such that $X\times Y$ is h-homogeneous? If $X$ is compact, can $Y$ be chosen to be compact?
\end{question}

\begin{remark}
In \cite{uspenskii}, using a very brief and elegant argument, Uspenski{\u\i} proved that for every non-empty space $X$ there exists a non-empty space $Y$ such that $X\times Y$ is homogeneous (in the sense of Definition \ref{homogeneous}). However, it is not true that $Y$ can be chosen to be compact whenever $X$ is compact: Motorov proved that the closure in the plane of $\{(x,\sin(1/x)):x\in (0,1]\}$ is not the retract of any compact homogeneous space (see Section 3 in \cite{arkhangelskii} for a proof).
\end{remark}

The following was proved by Matveev (see Proposition 3 in \cite{matveev1}) under the additional assumption that $X$ is zero-dimensional, even though such assumption is not actually used in the proof (see Appendix A). Recall that a sequence $\langle A_n:n\in\omega\rangle$ of subsets of a space $X$ \emph{converges to a point} $x$ if for every neighborhood $U$ of $x$ there exists $N\in\omega$ such that $A_n\subseteq U$ for each $n\geq N$.

\begin{proposition}[Matveev]\label{matveev}
Assume that $X$ has a $\pi$-base consisting of clopen sets that are homeomorphic to $X$. If there exists a sequence $\langle U_n:n\in\omega\rangle$ of non-empty open subsets of $X$ that converges to a point then $X$ is h-homogeneous. 
\end{proposition}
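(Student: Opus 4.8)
The plan is to prove the equivalent statement that every non-empty clopen subset $C\subseteq X$ is homeomorphic to $X$, and to isolate a single absorption property that delivers this. First I would record the only consequence of the $\pi$-base hypothesis that I need: since $\BBB$ is a $\pi$-base of clopen copies of $X$, every non-empty clopen set contains a clopen copy of $X$, and since every member of $\BBB$ is homeomorphic to $X$, inside such a copy one finds a clopen copy of an arbitrary clopen subset of $X$. Granting these, the whole proposition reduces to the following absorption claim: for every non-empty clopen $C\subseteq X$ one has $X\cong X\oplus C$. Indeed, given a non-empty clopen $C$, choose $B\in\BBB$ with $B\subseteq C$; if $B=C$ then $C\cong X$, and otherwise $C=B\sqcup(C\setminus B)\cong X\oplus(C\setminus B)\cong X$ by absorption applied to the non-empty clopen set $C\setminus B$.

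To prove absorption it suffices to produce a clopen set $E\subseteq X$ with $E\cong E\oplus C$, since then $X=E\oplus(X\setminus E)\cong(E\oplus C)\oplus(X\setminus E)\cong X\oplus C$. This is where the convergent sequence enters. Using zero-dimensionality I would first shrink the given open sets to pairwise disjoint non-empty \emph{clopen} sets $V_n$ still converging to the limit point $p_0$, and then, using the $\pi$-base, plant inside each $V_n$ a clopen copy $C_n\cong C$. The sets $C_n$ are then pairwise disjoint clopen copies of $C$ accumulating only at $p_0$ (any neighbourhood of $p_0$ contains a tail of them, and by Hausdorffness they cannot accumulate anywhere else). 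On the resulting ``convergent fan'' $\{p_0\}\cup\bigsqcup_n C_n$ a Hilbert-hotel shift --- send an extra copy of $C$ onto $C_0$, each $C_n$ onto $C_{n+1}$, and fix $p_0$ --- is a bijection that absorbs one more copy of $C$, and convergence to $p_0$ is exactly what makes this shift continuous at $p_0$, since a tail of the fan sits inside every neighbourhood of $p_0$ in both the domain and the range.

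The hard part will be turning this fan into an honest \emph{clopen} $E$ on which the shift is globally defined, and this is the step I expect to be the main obstacle. The naive candidate $E=\{p_0\}\cup\bigsqcup_n C_n$ is closed (its only accumulation point is $p_0$) but need not be open: $p_0$ may fail to be interior to it, and in fact $p_0$ need not even be a $G_\delta$ point (consider an uncountable power such as $(2^\omega)^{\omega_1}$), while in a pseudocompact space no clopen set can be an infinite disjoint union of non-empty clopen sets at all. So the delicate point is to realise the absorbing shift on a genuinely clopen subset of $X$ and to verify its continuity \emph{at} the non-isolated limit $p_0$ from the convergence alone, rather than by excising or separating $p_0$. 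I would organise the argument by cases: if $X$ is non-pseudocompact the proposition already follows from Terada's theorem \ref{teradanonpseudocompact}, since the $\pi$-base consists of clopen sets homeomorphic to $X$; the real content is therefore the pseudocompact case, where the finiteness forced by pseudocompactness must be reconciled with the infinite fan, and it is precisely in managing this tension at the limit point that the convergent-sequence hypothesis has to be used in full.
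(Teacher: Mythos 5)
First, note that the paper itself offers no proof of this proposition: it is quoted from Matveev with a citation to \cite{matveev1}, so your attempt can only be compared with the standard argument. Your overall skeleton is the right one. The reduction is correct: since $\BBB$ is a $\pi$-base of clopen copies of $X$, every non-empty clopen set contains a clopen copy of $X$ and hence a clopen copy of any prescribed clopen subset of $X$, and this reduces the proposition to the absorption law $X\cong X\oplus C$ for every non-empty clopen $C$. The fan of pairwise disjoint clopen copies $C_n\cong C$, planted in (a subsequence of) the $U_n$ and therefore accumulating only at the limit point $p_0$, together with a Hilbert-hotel shift, is exactly the right device.

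The genuine gap is that you stop at the decisive step: the absorption $X\cong X\oplus C$ is never proved, and the direction in which you defer it is a wrong turn. You do not need to realise the shift on a clopen $E$ that coincides with the fan; the clopen set you are looking for is $X$ itself, because the shift extends by the identity. Define $h\colon X\oplus C\to X$ by sending the external copy of $C$ homeomorphically onto $C_0$, each $C_n$ onto $C_{n+1}$, and fixing every point of $X\setminus\bigcup_n C_n$. Continuity of $h$ and of $h^{-1}$ at any point other than $p_0$ is purely local: each $C_n$ is clopen, and $X\setminus\cl\left(\bigcup_n C_n\right)$ is an open set on which $h$ is the identity, while $\cl\left(\bigcup_n C_n\right)\subseteq\bigcup_n C_n\cup\{p_0\}$ by the accumulation argument you already gave. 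Continuity at $p_0$ (in both directions) is precisely where convergence is used: a clopen neighbourhood $W$ of $p_0$ contains $C_n$ for all $n\geq N$, so $W\setminus(C_0\cup\dots\cup C_N)$ is a clopen neighbourhood of $p_0$ that $h$ (respectively $h^{-1}$) maps into $W$ (respectively into $W$ together with the tail of preimages, all inside the given neighbourhood). In particular the pseudocompact/non-pseudocompact case split is unnecessary, and the ``tension'' you describe in the pseudocompact case dissolves because the fan is never required to be open --- only the individual $C_n$ are. As written, your proposal correctly identifies the strategy but leaves its only non-routine step unproved and steers toward an unnecessary obstacle.
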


The case $\kappa=\omega$ of the following result is an easy consequence of Proposition \ref{matveev}. Motorov first proved it under the additional assumption that $X$ is a zero-dimensional first-countable compact space (see Theorem 0.2(2) in \cite{motorov2} and Theorem 1 in \cite{motorov1}). Terada proved it for an arbitrary infinite $\kappa$, under the additional assumption that $X$ is zero-dimensional and non-pseudocompact (see Corollary 3.2 in \cite{terada}).

\begin{theorem}\label{denseisolated}
Assume that $X$ is a space such that the isolated points are dense. Then $X^\kappa$ is h-homogeneous for every infinite cardinal $\kappa$.
\end{theorem}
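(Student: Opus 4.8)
The plan is to treat the pseudocompact and non-pseudocompact cases separately, with the common ingredient being the observation that for every infinite $\kappa$ the power $X^\kappa$ has a $\pi$-base consisting of clopen sets homeomorphic to $X^\kappa$. To see this, consider the clopen sets of the form $B=\{x\in X^\kappa:x_i=a_i\textrm{ for }i\in F\}$, where $F\subseteq\kappa$ is finite and each $a_i$ is an isolated point of $X$ (so that each $\{a_i\}$ is clopen and $B$ is a finite intersection of clopen cylinders). Since $F$ is finite and $\kappa$ is infinite, $B\cong X^{\kappa\setminus F}\cong X^\kappa$; and since the isolated points are dense in $X$, every non-empty basic open subset of $X^\kappa$ contains such a $B$, so these sets indeed form a $\pi$-base.

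First I would dispose of the case $\kappa=\omega$ using Proposition \ref{matveev}. The $\pi$-base above (taken for $\kappa=\omega$) supplies the required clopen $\pi$-base of copies of $X^\omega$, so it only remains to exhibit a sequence of open sets converging to a point. Fixing an isolated point $a\in X$, setting $p=(a,a,\ldots)$ and $U_n=\{x:x_i=a\textrm{ for all }i<n\}$, one checks that every neighborhood of $p$ contains $U_n$ for all sufficiently large $n$, so $\langle U_n:n\in\omega\rangle$ converges to $p$ and Proposition \ref{matveev} yields that $X^\omega$ is h-homogeneous. For a general infinite $\kappa$ I would then split into two cases. If $X^\kappa$ is non-pseudocompact, the $\pi$-base of clopen copies of $X^\kappa$ together with Theorem \ref{teradanonpseudocompact} gives h-homogeneity at once. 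If $X^\kappa$ is pseudocompact, I would instead write $X^\kappa\cong(X^\omega)^\kappa$ (using $\omega\cdot\kappa=\kappa$) and invoke Theorem \ref{productivepseudocompact}: each factor $X^\omega$ is h-homogeneous by the previous paragraph, and the product is pseudocompact by hypothesis, so $X^\kappa$ is h-homogeneous.

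The $\pi$-base construction, the reindexing homeomorphisms $X^{\kappa\setminus F}\cong X^\kappa$ and $X^\kappa\cong(X^\omega)^\kappa$, and the verification of convergence are all routine. The genuinely load-bearing point, and the step I expect to require the most care, is the pseudocompact case for uncountable $\kappa$: running the Matveev argument directly is hopeless there, since $X^\kappa$ admits no non-trivial convergent sequence of open sets when $\kappa>\omega$. The key insight that unlocks this case is that it reduces to the already-settled countable power by regrouping the coordinates into $\kappa$-many countable blocks, after which the productivity of h-homogeneity among pseudocompact products (Theorem \ref{productivepseudocompact}) does all the work; the crux is to confirm that this regrouping is legitimate, in particular that pseudocompactness is preserved under the homeomorphism $X^\kappa\cong(X^\omega)^\kappa$.
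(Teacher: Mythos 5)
Your proof is correct and follows essentially the same route as the paper: the paper handles $\kappa=\omega$ with exactly the same clopen $\pi$-base and convergent sequence of open sets fed into Proposition \ref{matveev}, and reduces general $\kappa$ to $\omega$ via the same regrouping $X^\kappa\cong(X^\omega)^\kappa$, citing Theorem \ref{productive} --- whose own proof is precisely your pseudocompact/non-pseudocompact case split. The only cosmetic difference is that in the non-pseudocompact case you apply Theorem \ref{teradanonpseudocompact} directly to $X^\kappa$ instead of Terada's productivity theorem to $(X^\omega)^\kappa$; both work.
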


\begin{proof}
We will show that $X^\omega$ is h-homogeneous and it has a $\pi$-base consisting of clopen sets. Since $X^\kappa\cong (X^\omega)^\kappa$ for every infinite cardinal $\kappa$, an application of Theorem \ref{productive} will conclude the proof.

Let $D$ be the set of isolated points of $X$ and let $\Fn(\omega, D)$ be the set of finite partial functions from $\omega$ to $D$. Given $s\in\Fn(\omega, D)$, define $U_s=\{f\in X^\omega :f\supseteq s\}$. Now fix $d\in D$ and let $g\in X^\omega$ be the constant function with value $d$. It is easy to see that $\langle U_{g\upharpoonright n}:n\in\omega\rangle$ is a sequence of open sets in $X^\omega$ that converges to $g$. Furthermore $\BBB=\{U_s:s\in\Fn(\omega, D)\}$ is a $\pi$-base for $X^\omega$ consisting of clopen sets that are homeomorphic to $X^\omega$. So $X^\omega$ is h-homogeneous by Proposition \ref{matveev}.
\end{proof}

\section{Infinite powers of zero-dimensional first-countable spaces}

\begin{definition}\label{homogeneous}
A space $X$ is \emph{homogeneous} if for every $x,y\in X$ there exists a homeomorphism $f:X\longrightarrow X$ such that $f(x)=y$.
\end{definition}

It is well-known (and easy to prove) that every zero-dimensional first-countable h-homogeneous space is homogeneous. As announced by Motorov (see Theorem 0.1 in \cite{motorov2}), the converse holds for zero-dimensional first-countable compact spaces of uncountable cellularity (see Theorem 2.5 in \cite{shelah} for a proof). In \cite{vandouwen}, Van Douwen constructed a zero-dimensional first-countable compact homogeneous space $X$ that is not h-homogeneous (actually, $X$ has no proper subspaces that are homeomorphic to $X$). In \cite{motorov3}, using similar techniques, Motorov constructed a zero-dimensional first-countable compact homogeneous space that is not divisible by $2$ (in the sense of Definition \ref{divisible}); see also Theorem 7.7 in \cite{vanmill2}.

In \cite{terada}, Terada asked whether $X^\omega$ is h-homogeneous for every zero-dimensional first-countable $X$. In \cite{dowpearl}, the following remarkable theorem is proved.

\begin{theorem}[Dow and Pearl]\label{dowpearl}
If $X$ is a zero-dimensional first-countable space then $X^\omega$ is homogeneous.
\end{theorem}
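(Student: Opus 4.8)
The plan is to prove homogeneity directly: given two points $p=(p_n)_{n\in\omega}$ and $q=(q_n)_{n\in\omega}$ of $X^\omega$, I would construct an autohomeomorphism $h$ of $X^\omega$ with $h(p)=q$. First I would record the structural features that make $X^\omega$ tractable. Zero-dimensionality and first-countability give, at every point of $X$, a decreasing base of clopen neighborhoods; consequently $X^\omega$ has a base consisting of clopen boxes $\prod_{i\in F}U_i\times\prod_{i\notin F}X$ (with $F$ finite and each $U_i$ a clopen neighborhood), and every point of $X^\omega$ has a countable clopen neighborhood base. I would also use freely the canonical identifications $X^\omega\cong(X^\omega)^\omega\cong X^\omega\times X^\omega$, which supply infinitely many spare copies of $X^\omega$ to absorb into.

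The construction I would run is a back-and-forth fusion producing $h$ as a limit of finite clopen approximations. At stage $n$ I would have a bijective matching between a clopen partition $\{B_s\}$ of the source and a clopen partition $\{B'_s\}$ of the target, realized by homeomorphisms $h_s\colon B_s\to B'_s$, refining the previous stage and with the distinguished points tracked into corresponding pieces (so the piece containing $p$ is sent toward the piece containing $q$). First-countability is exactly what lets me drive the mesh of these partitions to points along every branch, so that the limit of the $h_s$ is a well-defined continuous bijection with continuous inverse, that is, the desired homeomorphism. The back-and-forth bookkeeping, alternately refining on the source and on the target side, guarantees surjectivity and injectivity of the limit.

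The genuine difficulty --- and the reason a hypothesis on $X$ alone, rather than on $X^\omega$, suffices --- is the extension step: given a matched pair of clopen boxes, I must be able to subdivide the source box into finer clopen boxes and match them, by point-respecting homeomorphisms, with a subdivision of the target box. Here $X$ is in general \emph{not} homogeneous (van Douwen's example), so the local clopen neighborhood type at $p_i$ may have no counterpart at $q_i$, and one cannot simply match coordinate $i$ to coordinate $i$. The way through is to amalgamate blocks of coordinates and exploit the infinitely many spare factors: a box has the form $C\times X^\omega$ with $C$ clopen in a finite power of $X$, and the crux lemma to be proved is that two such boxes, each carrying a distinguished point, admit a homeomorphism sending point to point, obtained by shuffling the defect of $X$ into the infinite tail. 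Making this amalgamation coherent across all stages of the fusion --- so that the partial homeomorphisms commute with the refinement maps and therefore pass to the limit --- is where essentially all the work lies; once it is in place, the mesh and back-and-forth conditions are routine to maintain.
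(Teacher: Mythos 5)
The first thing to say is that the paper contains no proof of this statement: it is Theorem \ref{dowpearl}, quoted verbatim from Dow and Pearl \cite{dowpearl} and used as a black box, so there is no internal argument to compare yours against. Judged on its own terms, your proposal is an outline rather than a proof: the entire content is deferred to the ``crux lemma'' that any two clopen boxes $C\times X^\omega$ and $D\times X^\omega$, each carrying a distinguished point, admit a point-respecting homeomorphism, together with the coherence of its repeated application across the fusion --- and you explicitly leave both unproved. A back-and-forth fusion of clopen partial homeomorphisms driven to zero mesh by first countability is indeed the right general shape, but all of the theorem lives in the step you postpone.

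Moreover, the crux lemma in the generality you state it cannot be established by any known argument, and quite possibly is false. Taking the target box to be $X^\omega$ itself, it would say that every non-empty basic clopen box of $X^\omega$ is homeomorphic to $X^\omega$; since these boxes form a base, $X^\omega$ would then have a $\pi$-base of clopen copies of itself, and since any first-countable space has a decreasing local base at a point --- that is, a sequence of open sets converging to a point --- proposition \ref{matveev} would make $X^\omega$ h-homogeneous. That is exactly Terada's question, which this paper records as open and spends Section 3 merely reformulating. Note also that the fusion forces you to match \emph{all} pieces of each partition, not only the ones containing the tracked points $p$ and $q$, so you cannot retreat to a weaker, ``only along the distinguished branch'' version of the lemma. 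The whole point of Dow and Pearl's theorem is that one can move points around in $X^\omega$ without being able to match arbitrary clopen boxes; their construction is engineered precisely to avoid the step you take for granted, performing the absorption only in the limit rather than stage by stage. As it stands, the proposal has a fatal gap at its central step.
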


However, Terada's question remains open. In \cite{motorov1} and \cite{motorov2}, Motorov asks whether such an infinite power is always divisible by $2$. Using Theorem \ref{dowpearl}, we will show that the two questions are equivalent: actually even weaker conditions suffice (see Proposition \ref{hhomogeneouspowers}).

\begin{definition}\label{divisible} A space $F$ is a \emph{factor} of $X$ (or $X$ is \emph{divisible} by $F$) if there exists $Y$ such that $F\times Y\cong X$. If $F\times X\cong X$ then $F$ is a \emph{strong factor} of $X$ (or $X$ is \emph{strongly divisible} by $F$).
\end{definition}

We will use the following lemma freely in the rest of this section.

\begin{lemma} The following are equivalent.
\begin{enumerate}
\item\label{factor} $F$ is a factor of $X^\omega$.
\item\label{strongfactor} $F$ is a strong factor of $X^\omega$.
\item\label{infinitestrongfactor} $F^\omega$ is a strong factor of $X^\omega$.
\end{enumerate}
\end{lemma}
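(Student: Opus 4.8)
The plan is to prove the three implications $(\ref{factor})\Rightarrow(\ref{strongfactor})\Rightarrow(\ref{infinitestrongfactor})\Rightarrow(\ref{factor})$, which gives the full cycle of equivalences. The implication $(\ref{infinitestrongfactor})\Rightarrow(\ref{factor})$ is immediate, since if $F^\omega\times X^\omega\cong X^\omega$ then writing $Y=F^{\omega\setminus\{0\}}\times X^\omega$ exhibits $F$ as a factor of $X^\omega$ (here I use that $F^\omega\cong F\times F^\omega$). Similarly, $(\ref{strongfactor})\Rightarrow(\ref{factor})$ is trivial by taking $Y=X^\omega$ in the definition of factor, so the content lies in the two forward implications.

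First I would handle $(\ref{factor})\Rightarrow(\ref{strongfactor})$. Suppose $F$ is a factor of $X^\omega$, so there exists $Y$ with $F\times Y\cong X^\omega$. The natural idea is a Schroeder--Bernstein-style ``swindle'' exploiting the infinite exponent. Since $X^\omega\cong (X^\omega)^\omega$, I can write
\begin{equation}\nonumber
F\times X^\omega\cong F\times(X^\omega)^\omega\cong F\times(F\times Y)^\omega\cong F\times F^\omega\times Y^\omega\cong F^\omega\times Y^\omega\cong(F\times Y)^\omega\cong(X^\omega)^\omega\cong X^\omega.
\end{equation}
The key manipulations are the commutativity and associativity of products, the identity $F\times F^\omega\cong F^\omega$, and reassembling $F^\omega\times Y^\omega\cong(F\times Y)^\omega$. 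This chain simultaneously yields $(\ref{strongfactor})$.

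Next, for $(\ref{strongfactor})\Rightarrow(\ref{infinitestrongfactor})$, assume $F\times X^\omega\cong X^\omega$. Iterating this (formally, by a telescoping argument) gives $F^\omega\times X^\omega\cong X^\omega$: concretely,
\begin{equation}\nonumber
F^\omega\times X^\omega\cong F^\omega\times(X^\omega)^\omega\cong(F\times X^\omega)^\omega\cong(X^\omega)^\omega\cong X^\omega,
\end{equation}
where the middle isomorphism again redistributes the countably many factors of $F$ among the countably many copies of $X^\omega$. The main subtlety to watch for is that all these rearrangements of countable products are genuine homeomorphisms and that the bookkeeping of indices is consistent; once the identities $X^\omega\cong(X^\omega)^\omega$ and $F\times F^\omega\cong F^\omega$ are in hand, each step is a routine reindexing of coordinates, so I do not expect any serious obstacle.
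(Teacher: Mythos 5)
Your proof is correct and uses the same key idea as the paper: the $\omega$-power swindle $X^\omega\cong(X^\omega)^\omega\cong(F\times Y)^\omega\cong F^\omega\times Y^\omega$ together with the absorption $F\times F^\omega\cong F^\omega$. The only difference is organizational (you close a cycle $(1)\Rightarrow(2)\Rightarrow(3)\Rightarrow(1)$, while the paper derives $(2)$ and $(3)$ directly from the same displayed isomorphism), which is not a substantive departure.
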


\begin{proof}
The implications $\ref{strongfactor}\rightarrow\ref{factor}$ and $\ref{infinitestrongfactor}\rightarrow\ref{factor}$ are clear.

Assume \ref{factor}. Then there exists $Y$ such that $F\times Y\cong X^\omega$, hence
$$
X^\omega\cong (X^\omega)^\omega\cong (F\times Y)^\omega\cong F^\omega\times Y^\omega.
$$
Since multiplication by $F$ or by $F^\omega$ does not change the right-hand side, it follows that \ref{strongfactor} and \ref{infinitestrongfactor} hold.
\end{proof}

\begin{lemma}\label{isolatedpoint}
Assume that $Y$ is a non-empty zero-dimensional first-countable space. Then $X=(Y\oplus 1)^\omega$ is h-homogeneous and $X\cong Y^\omega\times (Y\oplus 1)^\omega\cong  2^\omega\times Y^\omega$.
\end{lemma}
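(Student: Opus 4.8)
The plan is to reduce both assertions to a single absorption statement and then read them off. Write $Z=Y\oplus 1$, let $p$ be the added isolated point, and put $X=Z^\omega$; note that $Z$, and hence $X$, is first-countable and zero-dimensional. Splitting the first coordinate according to whether its value is $p$ or lies in $Y$ gives, with no extra hypotheses, $Z^\omega\cong Z^\omega\oplus(Y\times Z^\omega)$; call this $(\ast)$. The statement I would try to prove is the \emph{key lemma}: for every non-empty clopen $V\subseteq Z$ one has $V\times Z^\omega\cong Z^\omega$. Everything else is formal once this is in hand.

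First I would deduce h-homogeneity from Proposition \ref{matveev}. The convergent sequence of open sets is automatic: $X$ is first-countable and has no isolated points, so a decreasing neighbourhood base at any point is a sequence of open sets converging to it (exactly as in the proof of the theorem on dense isolated points). For the $\pi$-base, recall that the clopen boxes $\prod_{i<m}W_i\times Z^{[m,\omega)}$, with each $W_i$ a non-empty clopen subset of $Z$, form a base of $X$. Applying the key lemma to $W_{m-1},\dots,W_0$ in turn collapses such a box to $Z^\omega$, so every basic clopen set is homeomorphic to $X$. Thus $X$ has a base, a fortiori a $\pi$-base, of clopen copies of itself, and Proposition \ref{matveev} applies.

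Next I would derive the two homeomorphisms from the case $V=Y$ of the key lemma, i.e. $Y\times Z^\omega\cong Z^\omega$, together with the divisibility lemma. Regrouping coordinates upgrades this to $Y^\omega\times Z^\omega\cong Z^\omega$, which is exactly $X\cong Y^\omega\times(Y\oplus 1)^\omega$. Feeding $Y\times Z^\omega\cong Z^\omega$ into $(\ast)$ gives $Z^\omega\cong Z^\omega\oplus Z^\omega\cong 2\times Z^\omega$, so the divisibility lemma yields $2^\omega\times Z^\omega\cong Z^\omega$. Writing $K=2^\omega\times Y^\omega\cong(Y\oplus Y)^\omega$, I would then check that $K$ and $Z^\omega$ are strong factors of one another: on one side $K\times Z^\omega\cong 2^\omega\times(Y^\omega\times Z^\omega)\cong 2^\omega\times Z^\omega\cong Z^\omega$; on the other, $Y\times K\cong 2^\omega\times(Y\times Y^\omega)\cong K$ gives $(Y\oplus 1)\times K\cong K\oplus K\cong 2\times K\cong K$, so $Y\oplus 1$ is a factor of $(Y\oplus Y)^\omega$ and the divisibility lemma gives $K\times Z^\omega\cong K$. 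Comparing the two lines, $Z^\omega\cong K\times Z^\omega\cong K$, that is, $X\cong 2^\omega\times Y^\omega$.

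The main obstacle is the key lemma itself: showing that the clopen box $V\times Z^\omega$ can absorb the complementary copy of $Z^\omega$ furnished by $(\ast)$. This is precisely where the isolated point must be used; the analogous statement for $Y^\omega$, that $V\times Y^\omega\cong Y^\omega$ for clopen $V\subseteq Y$, is not available, being essentially Terada's open problem. I would attempt a back-and-forth construction: present $Z^\omega$ as a decreasing tower of clopen copies $U_0\supseteq U_1\supseteq\cdots$ of itself shrinking to $\mathbf p=(p,p,\dots)$, with successive differences $U_n\setminus U_{n+1}\cong Y\times Z^\omega$, and redistribute a single ring through the tower while using first-countability to preserve convergence to $\mathbf p$. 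Note that the homeomorphisms require only the case $V=Y$, whereas the $\pi$-base needs all clopen $V$, and the passage to general $V$ does not seem to follow formally from the divisibility lemma. A possible simplification is to split on pseudocompactness of $X$: when $X$ is pseudocompact, Corollary \ref{finitelymanycoordinates} forces clopen sets to depend on finitely many coordinates (reducing matters to the Cantor-set case), and the non-pseudocompact case might instead be routed through Theorem \ref{teradanonpseudocompact}.
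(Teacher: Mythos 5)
Your reduction of everything to the ``key lemma'' ($V\times Z^\omega\cong Z^\omega$ for every non-empty clopen $V\subseteq Z$) leaves a genuine gap: that lemma is never proved, and it is essentially as strong as the h-homogeneity you are trying to establish (indeed, once $X$ is h-homogeneous it follows immediately, since $V\times Z^\omega$ is a non-empty clopen subset of $X$). The back-and-forth/absorption sketch you offer in its place does not work as stated: from $(\ast)$ you only get $Z^\omega\cong (Y\times Z^\omega)\oplus Z^\omega$, and no Schr\"oder--Bernstein-type argument extracts $Y\times Z^\omega\cong Z^\omega$ from that without further input. The two fallbacks also fail for the same reason: in the pseudocompact case, clopen sets depending on finitely many coordinates are finite disjoint unions of boxes $\prod_{i<m}W_i\times Z^{[m,\omega)}$, and you still need each such box to be a copy of $Z^\omega$, which is the key lemma again; and Theorem \ref{teradanonpseudocompact} requires as a hypothesis exactly the $\pi$-base of clopen copies of $X$ that you are trying to produce.

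The missing idea is the Dow--Pearl theorem (Theorem \ref{dowpearl}), which is why the lemma is placed in this section and why first-countability is assumed. What you get for free is only a \emph{local} base of clopen copies of $X$ at the single point $\mathbf p=(p,p,\dots)$: the sets $U_n=\{f\in X: f(i)=p\textrm{ for }i<n\}$ satisfy $U_n\cong 1^n\times Z^{[n,\omega)}\cong X$ because the constrained coordinates are forced into the \emph{isolated} singleton. Since $X=Z^\omega$ is homogeneous by Dow--Pearl, homeomorphisms of $X$ transport this local base to every point, yielding a base (hence a $\pi$-base) of clopen copies of $X$; Proposition \ref{matveev} then gives h-homogeneity, and your key lemma becomes a corollary rather than an input. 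Your derivation of the two product formulas from $Y\times Z^\omega\cong Z^\omega$ is correct but roundabout; the paper gets $Y\times X\cong X$ directly from h-homogeneity applied to the clopen decomposition $X\cong(Y\times X)\oplus X$, and then computes $Y^\omega\times(Y\oplus 1)^\omega\cong(Y^\omega\oplus Y^\omega)^\omega\cong 2^\omega\times Y^\omega$ in one line.
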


\begin{proof}
Recall that $1=\{0\}$ and let $g\in X$ be the constant function with value $0$. For each $n\in\omega$, define
$$
U_n=\{f\in X: f(i)=0\textrm{ for all }i<n\}.
$$
Observe that $\BBB=\{U_n: n\in\omega\}$ is a local base for $X$ at $g$ consisting of clopen sets that are homeomorphic to $X$. But $X$ is homogeneous by Theorem \ref{dowpearl}, therefore it has such a local base at every point. In conclusion $X$ has a base (hence a $\pi$-base) consisting of clopen sets that are homeomorphic to $X$. It follows from Proposition \ref{matveev} that $X$ is h-homogeneous.

To prove the second statement, observe that
$$
X\cong (Y\oplus 1)\times X\cong (Y\times X)\oplus X,
$$
hence $X\cong Y\times X$ by h-homogeneity. It follows that $X\cong Y^\omega\times (Y\oplus 1)^\omega$. Finally,
$$
Y^\omega\times (Y\oplus 1)^\omega\cong \left(Y^\omega\times (Y\oplus 1)\right)^\omega\cong (Y^\omega\oplus Y^\omega)^\omega\cong 2^\omega\times Y^\omega,
$$
that concludes the proof.
\end{proof}

\begin{proposition}\label{hhomogeneouspowers} Assume that $X$ is a zero-dimensional first-countable space containing at least two points. Then the following are equivalent.
\begin{enumerate}
\item\label{xoplus1} $X^\omega\cong (X\oplus 1)^\omega$.
\item\label{yisolated} $X^\omega\cong Y^\omega$ for some space $Y$ with at least one isolated point.
\item\label{hhomogeneous} $X^\omega$ is h-homogeneous.
\item\label{divisiblebytwo} $X^\omega$ has a non-empty clopen subset that is strongly divisible by $2$.
\item\label{properclopen} $X^\omega$ has a proper clopen subset that is homeomorphic to $X^\omega$.
\item\label{properclopenfactor} $X^\omega$ has a proper clopen subset that is a factor of $X^\omega$.
\end{enumerate}
\end{proposition}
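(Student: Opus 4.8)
The plan is to organize the six conditions around the central condition \ref{hhomogeneous}. Throughout write $W=X^\omega$ and note $W\cong W^\omega$; since $X$ is zero-dimensional with at least two points, $W$ has a proper nonempty clopen subset and so is disconnected. First I would settle the equivalence of \ref{xoplus1}, \ref{yisolated}, \ref{hhomogeneous}. For \ref{xoplus1}$\Rightarrow$\ref{hhomogeneous}, Lemma \ref{isolatedpoint} with $Y=X$ shows $(X\oplus1)^\omega$ is h-homogeneous and \ref{xoplus1} identifies $W$ with it. For \ref{hhomogeneous}$\Rightarrow$\ref{xoplus1}, a disconnected h-homogeneous space splits into two clopen copies of itself, so $W\cong 2\times W$; raising to the $\omega$-th power and using $W\cong W^\omega$ gives $W\cong 2^\omega\times W$, which is $(X\oplus1)^\omega$ by Lemma \ref{isolatedpoint}. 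The implication \ref{xoplus1}$\Rightarrow$\ref{yisolated} is immediate ($Y=X\oplus1$), and \ref{yisolated}$\Rightarrow$\ref{hhomogeneous} follows by writing $Y=Z\oplus1$ at an isolated point (with $Z\neq\varnothing$, else $X$ is a singleton) and applying Lemma \ref{isolatedpoint} once more, after checking that $Z$, as a subspace of $Y^\omega\cong W$, is first-countable and zero-dimensional.

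Next I would record the easy direction for the remaining three conditions. Since \ref{hhomogeneous} yields $W\cong 2\times W$, the whole space witnesses \ref{divisiblebytwo}; any proper nonempty clopen subset of the h-homogeneous $W$ is homeomorphic to $W$, giving \ref{properclopen}, whence \ref{properclopenfactor} trivially. The reverse \ref{properclopenfactor}$\Rightarrow$\ref{properclopen} is clean via $W\cong W^\omega$: if $C$ is a proper clopen factor then the factor lemma above gives $C\times W\cong W$, so the clopen set $\{f\in W^\omega:f(0)\in C\}\cong C\times W\cong W$ is a \emph{proper} clopen copy of $W$.

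Thus everything reduces to closing the loop from the ``local'' conditions \ref{divisiblebytwo} and \ref{properclopen} back to \ref{hhomogeneous}. Here I would invoke Theorem \ref{dowpearl}: $W$ is homogeneous, and being first-countable zero-dimensional it automatically carries a sequence of clopen sets converging to a point, so the hypotheses of Proposition \ref{matveev} reduce to producing a $\pi$-base of clopen sets homeomorphic to $W$. By homogeneity this is equivalent to the clopen copies of $W$ being $\pi$-dense, which in $W\cong W^\omega$ amounts to the clopen \emph{factors} of $W$ forming a $\pi$-base. The task is therefore to upgrade a single clopen factor (from \ref{properclopen}/\ref{properclopenfactor}) or a single strongly $2$-divisible clopen set (from \ref{divisiblebytwo}) into such a $\pi$-base—equivalently, to establish $W\cong 2\times W$.

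I expect this upgrade to be the main obstacle, and I would attack it through the pseudocompact/non-pseudocompact dichotomy already exploited in the paper. In the non-pseudocompact case I would peel a proper clopen copy off $W$ repeatedly, $W\cong W\oplus D\cong(W\oplus D)\oplus D\cong\cdots$, using the unbounded real-valued function to drive the nested copies to infinity so that no remainder survives; this writes $W$ as a countable clopen sum and yields $W\cong\omega\times D\cong 2\times W$, and alternatively one can feed the resulting $\pi$-base of copies into Theorem \ref{teradanonpseudocompact}. In the pseudocompact case Corollary \ref{finitelymanycoordinates} forces the given clopen datum to depend on finitely many coordinates; after regrouping $W\cong(X^F)^\omega$ I would transfer it into a single coordinate and argue there. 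The delicate point is that now the ``all-in-$C$'' remainder $C^\omega$ is nonempty, so the clean countable-sum collapse fails and one must instead push divisibility by $2$ down to the single coordinate and spread it back through the power. Condition \ref{divisiblebytwo} is the most stubborn, since a strongly $2$-divisible clopen set need not be a factor; there I would lean hardest on homogeneity together with Proposition \ref{matveev} to convert local divisibility into global h-homogeneity, and then close with \ref{hhomogeneous}$\Rightarrow$\ref{xoplus1} via Theorem \ref{productive}.
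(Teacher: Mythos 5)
Your treatment of the equivalence $\ref{xoplus1}\Leftrightarrow\ref{yisolated}\Leftrightarrow\ref{hhomogeneous}$ and of the forward implications $\ref{hhomogeneous}\Rightarrow\ref{divisiblebytwo}$, $\ref{hhomogeneous}\Rightarrow\ref{properclopen}\Rightarrow\ref{properclopenfactor}$ is correct. But the cycle is never closed: you prove no implication from any of \ref{divisiblebytwo}, \ref{properclopen}, \ref{properclopenfactor} back to \ref{xoplus1}--\ref{hhomogeneous}. The final two paragraphs, which you yourself flag as ``the main obstacle,'' are a plan rather than a proof, and the plan has real holes: in the non-pseudocompact case, from $W\cong W\oplus D$ you can only conclude $W\cong W\oplus(n\times D)$ for each finite $n$; passing to ``$W$ as a countable clopen sum with no remainder surviving'' is an Eilenberg-swindle step that does not follow without a separate absorption argument, and $W\cong\omega\times D$ would not follow even then. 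The pseudocompact case is left entirely open (``the delicate point\ldots''). Moreover the whole dichotomy is unnecessary.

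The fix is already in your hands, in the step you used to prove $\ref{properclopenfactor}\Rightarrow\ref{properclopen}$: if $C$ is a proper clopen factor of $W=X^\omega$, the factor lemma gives $C\times W\cong W$, and then with $D=W\setminus C$,
$$
W\cong(C\oplus D)\times W\cong(C\times W)\oplus(D\times W)\cong W\oplus(D\times W)\cong(1\oplus D)\times W ,
$$
so $1\oplus D$ is a strong factor of $W$; by the factor lemma again so is $(1\oplus D)^\omega\cong 2^\omega\times D^\omega$ (Lemma \ref{isolatedpoint}, applicable since $D$ is a non-empty clopen subset of the first-countable zero-dimensional $W$), whence $W\cong 2^\omega\times W\cong(X\oplus 1)^\omega$. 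This is exactly the paper's argument for $\ref{properclopenfactor}\Rightarrow\ref{xoplus1}$, and it needs no pseudocompactness split. Finally, you also leave \ref{divisiblebytwo} dangling (you derive it from \ref{hhomogeneous} but prove nothing from it); the missing link $\ref{divisiblebytwo}\Rightarrow\ref{properclopen}$ is easy: if $E\neq\varnothing$ is clopen with $E\cong E\oplus E$, then $W\cong E\oplus(E\oplus(W\setminus E))\cong E\oplus W$ exhibits a proper clopen copy of $W$ inside $W$. With these two steps the chain $\ref{divisiblebytwo}\Rightarrow\ref{properclopen}\Rightarrow\ref{properclopenfactor}\Rightarrow\ref{xoplus1}$ closes the loop.
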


\begin{proof}
The implication $\ref{xoplus1}\rightarrow\ref{yisolated}$ is trivial; the implication $\ref{yisolated}\rightarrow\ref{hhomogeneous}$ follows from Lemma \ref{isolatedpoint}; the implications $\ref{hhomogeneous}\rightarrow\ref{divisiblebytwo}\rightarrow\ref{properclopen}\rightarrow\ref{properclopenfactor}$ are trivial.

Assume that $\ref{properclopenfactor}$ holds. Let $C$ be a proper clopen subset of $X^\omega$ that is a factor of $X^\omega$ and let $D=X^\omega\setminus C$. Then
\begin{eqnarray}\nonumber
X^\omega & \cong & (C\oplus D)\times X^\omega\\\nonumber
& \cong & (C\times X^\omega) \oplus (D\times X^\omega)\\\nonumber
& \cong & X^\omega\oplus (D\times X^\omega)\\\nonumber
& \cong & (1\oplus D)\times X^\omega,
\end{eqnarray}
hence $X^\omega\cong (1\oplus D)^\omega\times X^\omega$. Since $(1\oplus D)^\omega\cong 2^\omega\times D^\omega$ by Lemma \ref{isolatedpoint}, it follows that $2^\omega$ is a factor of $X^\omega$. So $2^\omega$ is a strong factor of $X^\omega$. Therefore \ref{xoplus1} holds by Lemma \ref{isolatedpoint}.
\end{proof}

The next two propositions show that in the pseudocompact case we can say something more.

\begin{proposition}\label{properclopenpseudocompact}
Assume that $X$ is a zero-dimensional first-countable space such that $X^\omega$ is pseudocompact. Then $C^\omega\cong (X\oplus 1)^\omega$ for every non-empty proper clopen subset $C$ of $X^\omega$.
\end{proposition}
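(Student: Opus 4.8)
The plan is to put $C$ into a finite-coordinate normal form and then to deduce everything from the h-homogeneity of $2^\omega\times X^\omega$ together with Proposition \ref{hhomogeneouspowers}, applied not to $X$ but to $C$ itself. First I would invoke pseudocompactness: by Corollary \ref{finitelymanycoordinates} the clopen set $C$ depends on finitely many coordinates, so (after rearranging) $C\cong C'\times X^\omega$ for some clopen $C'\subseteq X^n$, and because $C$ is non-empty and proper, $C'$ is a non-empty proper clopen subset of $X^n$. Consequently
\[
C^\omega\cong (C'\times X^\omega)^\omega\cong (C')^\omega\times X^\omega .
\]
Set $H=2^\omega\times X^\omega$. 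By Lemma \ref{isolatedpoint} we have $H\cong (X\oplus1)^\omega$ and $H$ is h-homogeneous, so the goal reduces to proving $C^\omega\cong H$.

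The first half is to show that $C^\omega$ is divisible by $2$, i.e. $C^\omega\cong 2^\omega\times C^\omega$. For this I would verify condition \ref{properclopen} of Proposition \ref{hhomogeneouspowers} for the space $C$ (which is first-countable, zero-dimensional and has at least two points): inside $(C')^\omega\times X^\omega\cong C^\omega$ consider the clopen set $E=(C')^\omega\times C$. It is proper because $C\subsetneq X^\omega$, and since one factor of $C'$ can be absorbed into the infinite power,
\[
E=(C')^\omega\times C\cong (C')^\omega\times C'\times X^\omega\cong (C')^\omega\times X^\omega\cong C^\omega .
\]
Thus $C^\omega$ has a proper clopen subspace homeomorphic to itself, and Proposition \ref{hhomogeneouspowers} (\ref{properclopen}$\Rightarrow$\ref{xoplus1}) gives $C^\omega\cong (C\oplus1)^\omega\cong 2^\omega\times C^\omega$, the last homeomorphism by Lemma \ref{isolatedpoint}.

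The second half is to show $2^\omega\times C^\omega\cong H$. The key observation is that $X^n\times H\cong 2^\omega\times X^n\times X^\omega\cong H$, so $C'\times H$ is a non-empty clopen subset of a copy of $H$ and is therefore homeomorphic to $H$ by h-homogeneity. Using also $H^\omega\cong H$, I would then absorb the entire infinite power at once:
\[
2^\omega\times C^\omega\cong (C')^\omega\times H\cong (C')^\omega\times H^\omega\cong (C'\times H)^\omega\cong H^\omega\cong H .
\]
Combining the two halves yields $C^\omega\cong 2^\omega\times C^\omega\cong H\cong (X\oplus1)^\omega$, as desired.

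I expect the main obstacle to be conceptual rather than computational: one must recognize that the single leftover factor $C'$ produced by the finite-coordinate reduction cannot be cancelled at the level of $C$ (there is no reason that $C'\times C'\cong C'$), yet becomes harmless after passing to the countable power, where it is absorbed either into $(C')^\omega$ in the first half or into the h-homogeneous space $H$ in the second. Seeing that Proposition \ref{hhomogeneouspowers} should be applied to $C$ rather than $X$, and that its clopen-self-copy condition is exactly what furnishes divisibility by $2$, is the decisive step.
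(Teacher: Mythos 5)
Your argument is correct and is essentially the paper's own proof: both halves match, since the paper likewise applies Proposition~\ref{hhomogeneouspowers} to $C$ (exhibiting a proper clopen self-copy inside $C^\omega\times X^\omega\cong C^\omega$) to obtain divisibility by $2$, and likewise absorbs $C$ into the h-homogeneous space $(X\oplus 1)^\omega$ by h-homogeneity before passing to $\omega$-th powers. The only cosmetic difference is that you work throughout with the finite-coordinate representative $C'$, whereas the paper works directly with $C$ via the relation $C\cong C\times X^\omega$.
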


\begin{proof}
Let $C$ be a non-empty proper clopen subset of $X^\omega$. It follows from Corollary \ref{finitelymanycoordinates} that $C\cong C\times X^\omega$, hence $C^\omega\cong C^\omega\times X^\omega$. Since $C^\omega\times X^\omega$ clearly has a proper clopen subset that is homeomorphic to $C^\omega\times X^\omega$, Proposition \ref{hhomogeneouspowers} implies that $C^\omega$ is h-homogeneous, hence strongly divisible by $2$. So $C^\omega\cong 2^\omega\times C^\omega\cong 2^\omega\times C^\omega\times X^\omega$. Since $2^\omega\times X^\omega\cong (X\oplus 1)^\omega$ by Lemma \ref{isolatedpoint}, it follows that $C^\omega\cong C^\omega\times (X\oplus 1)^\omega$.

On the other hand, $(X\oplus 1)^\omega\cong X^\omega\times (X\oplus 1)^\omega$ by Lemma \ref{isolatedpoint}. Hence $(X\oplus 1)^\omega$ has a clopen subset homeomorphic to $C\times (X\oplus 1)^\omega$. But Lemma \ref{isolatedpoint} shows that $(X\oplus 1)^\omega$ is h-homogeneous, so $C\times (X\oplus 1)^\omega\cong (X\oplus 1)^\omega$. Therefore $C^\omega\times (X\oplus 1)^\omega\cong (X\oplus 1)^\omega$, that concludes the proof.
\end{proof}

\begin{proposition} In addition to the hypotheses of Proposition \ref{hhomogeneouspowers}, assume that $X^\omega$ is pseudocompact. Then the following can be added to the list of equivalent conditions.
\begin{enumerate}
\item[(7)] $X^\omega$ has a non-empty proper clopen subset that is homeomorphic to $Y^\omega$ for some space $Y$.
\end{enumerate}
\end{proposition}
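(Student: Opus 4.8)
The plan is to splice the new condition (7) into the cycle of equivalences of proposition \ref{hhomogeneouspowers}, between conditions \ref{hhomogeneous} and \ref{divisiblebytwo}, by establishing the two implications $\ref{hhomogeneous}\rightarrow(7)$ and $(7)\rightarrow\ref{divisiblebytwo}$. Since \ref{hhomogeneous} and \ref{divisiblebytwo} are already known to be equivalent, this suffices to add (7) to the list. Throughout I take the clopen subspace $C$ in (7) to be non-empty (equivalently $Y$ non-empty); otherwise (7) would hold vacuously via $C=\varnothing\cong\varnothing^\omega$, so non-emptiness is the intended reading, consistent with the convention already implicit in conditions \ref{properclopen} and \ref{properclopenfactor}.

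First I would dispatch the easy implication $\ref{hhomogeneous}\rightarrow(7)$. Assume $X^\omega$ is h-homogeneous. Because $X$ is zero-dimensional and contains at least two points, it has a clopen set $U$ with $\varnothing\neq U\neq X$, so that $C=\{f\in X^\omega:f(0)\in U\}$ is a non-empty proper clopen subset of $X^\omega$. By h-homogeneity $C\cong X^\omega$, and since $X^\omega$ is itself an infinite power we may take $Y=X$ to get $C\cong Y^\omega$. This is exactly (7). (This direction does not even use pseudocompactness.)

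The substance lies in $(7)\rightarrow\ref{divisiblebytwo}$. Let $C$ be a non-empty proper clopen subset of $X^\omega$ with $C\cong Y^\omega$. The key point is that the hypothesis $C\cong Y^\omega$ renders $C$ idempotent under infinite powers: $C^\omega\cong(Y^\omega)^\omega\cong Y^\omega\cong C$. On the other hand, since $X^\omega$ is pseudocompact, proposition \ref{properclopenpseudocompact} applies to $C$ and gives $C^\omega\cong(X\oplus 1)^\omega$. Combining the two, $C\cong(X\oplus 1)^\omega$, and by lemma \ref{isolatedpoint} the latter is homeomorphic to $2^\omega\times X^\omega$. As $2\times 2^\omega\cong 2^\omega$, we obtain $2\times C\cong 2\times 2^\omega\times X^\omega\cong 2^\omega\times X^\omega\cong C$, so $2$ is a strong factor of $C$. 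Thus $X^\omega$ has a clopen subset strongly divisible by $2$, which is condition \ref{divisiblebytwo}. (Alternatively, once $C\cong(X\oplus 1)^\omega$ the strong divisibility by $2$ is immediate from the h-homogeneity of $(X\oplus 1)^\omega$ furnished by lemma \ref{isolatedpoint}, since an h-homogeneous space with a proper non-empty clopen subset satisfies $C\cong C\oplus C$.)

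I expect the single genuine obstacle to be the step that converts proposition \ref{properclopenpseudocompact} — which controls only the infinite power $C^\omega$ — into control of $C$ itself; everything else is bookkeeping with lemma \ref{isolatedpoint} and the absorption property $2\times 2^\omega\cong 2^\omega$. That conversion is precisely where the extra strength of (7) over the bare "proper clopen subspace" conditions is consumed: the assumption that the clopen subspace is already an infinite power is exactly what collapses $C^\omega$ back to $C$, and without it proposition \ref{properclopenpseudocompact} alone would not yield the conclusion.
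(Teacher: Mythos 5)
Your proposal is correct and follows essentially the same route as the paper: the key step is identical, namely using $C\cong Y^\omega$ to get $C^\omega\cong C$, then invoking proposition \ref{properclopenpseudocompact} to conclude $C\cong(X\oplus 1)^\omega$ and hence strong divisibility by $2$ via lemma \ref{isolatedpoint}. The only cosmetic difference is that the paper enters the cycle via the trivial implication $5\rightarrow 7$ rather than your $3\rightarrow 7$.
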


\begin{proof}
The implication $5\rightarrow 7$ is trivial.

Assume that 7 holds. Let $C$ be a non-empty proper clopen subset of $X^\omega$ that is homeomorphic to $Y^\omega$ for some space $Y$. Then clearly $C^\omega\cong C$. Therefore $C\cong (X\oplus 1)^\omega$ by Proposition \ref{properclopenpseudocompact}. Hence $C$ is strongly divisible by $2$ by Lemma \ref{isolatedpoint}, showing that 4 holds.
\end{proof}

Finally, we point out that Proposition \ref{hhomogeneouspowers} can be used to give a positive answer to Terada's question for a certain class of spaces. We will need the following definition.

\begin{definition} A space $X$ is \emph{ultraparacompact} if every open cover of $X$ has a refinement consisting of pairwise disjoint clopen sets.
\end{definition}

It is easy to see that every ultraparacompact space is zero-dimensional. As noted by Nyikos in \cite{nyikos}, a space is ultraparacompact if and only if it is paracompact and strongly zero-dimensional (this is proved like Proposition 1.2 in \cite{ellis}). A metric space $X$ is ultraparacompact if and only if $\dim X=0$ (see Theorem 7.2.4 in \cite{engelking}); see also Theorem 7.3.3 in \cite{engelking}. For such a metric space $X$, Van Engelen proved that $X^\omega$ is h-homogeneous if $X$ is of the first category in itself or $X$ has a completely metrizable dense subset (see Theorem 4.2 and Theorem 4.4 in \cite{vanengelen}). It follows that $X^\omega$ is h-homogeneous if $X$ is analytic (see Corollary \ref{dichotomy}). For related results, see also Theorem 8 and Theorem 9 in \cite{ostrovskii}.

\begin{proposition}
Assume that $X$ is a (zero-dimensional) first-countable space. If $X^\omega$ is ultraparacompact and non-Lindel\"of then $X^\omega$ is h-homogeneous.
\end{proposition}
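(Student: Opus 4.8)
The plan is to reduce everything to Proposition~\ref{hhomogeneouspowers}. First note that $X$ must contain at least two points: a one-point space has compact, hence Lindel\"of, powers, so $X^\omega$ would be Lindel\"of, contrary to hypothesis. Thus Proposition~\ref{hhomogeneouspowers} applies to $X$, and it suffices to verify any one of its equivalent conditions. I would aim at condition~\ref{divisiblebytwo} (that $X^\omega$ has a clopen subset which is strongly divisible by $2$), keeping condition~\ref{properclopen} (a proper clopen subspace homeomorphic to $X^\omega$) as an alternative target.

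The first concrete step is to show that $X^\omega$ is non-pseudocompact. Since $X^\omega$ is non-Lindel\"of, fix an open cover with no countable subcover; by ultraparacompactness it has a refinement $\mathcal V$ consisting of pairwise disjoint clopen sets. If $\mathcal V$ were countable, choosing for each of its members a cover element containing it would yield a countable subcover, a contradiction; so $\mathcal V$ is uncountable. Writing $X^\omega=\bigsqcup_{\alpha\in A}V_\alpha$ with $A$ uncountable, observe that any sub-union of this partition is clopen (both it and its complement are open). Hence, selecting countably many pieces $V_{\alpha_0},V_{\alpha_1},\dots$ and setting $f\equiv n$ on $V_{\alpha_n}$ and $f\equiv 0$ off their union defines an unbounded continuous real-valued function, so $X^\omega$ is non-pseudocompact; by Lemma~\ref{dispensezerodimensionality} it is then the disjoint union of infinitely many nonempty clopen sets. (A useful auxiliary remark: the retraction $(x,y)\mapsto(k,y)$ exhibits $X^\omega$ as a retract of every nonempty clopen box $K\times X^\omega$, so \emph{every} nonempty clopen subset of $X^\omega$ is non-Lindel\"of, hence non-pseudocompact.)

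The heart of the argument is to produce a clopen $C\subseteq X^\omega$ with $2\times C\cong C$. It suffices to find countably many pairwise homeomorphic, pairwise disjoint, nonempty clopen sets: their union $C$ is clopen and homeomorphic to $\omega\times D$ for the common type $D$, whence $2\times C\cong 2\times\omega\times D\cong\omega\times D\cong C$, and Proposition~\ref{hhomogeneouspowers} finishes the proof. If some homeomorphism type occurs infinitely often among the $V_\alpha$, we are done at once. The main obstacle is the complementary case, in which every homeomorphism type occurs only finitely often (so the uncountable partition splits into uncountably many types). Here I would exploit $X^\omega\cong(X^\omega)^\omega$ together with the fact that finite permutations of coordinates are self-homeomorphisms: sets of the form $V_{\alpha_0}\times\cdots\times V_{\alpha_{n-1}}\times X^\omega$ (over distinct indices) are clopen and are carried onto one another by coordinate permutations, yielding arbitrarily large \emph{finite} families of mutually homeomorphic disjoint clopen sets. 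The delicate point—where zero-dimensionality in the non-pseudocompact regime bites—is to pass from arbitrarily large finite families to a single countably infinite one: disjointness forces the copies to disagree on a common coordinate, yet keeping them clopen forbids constraining infinitely many coordinates, and a naive ``first occurrence'' indexing $V_*^c\times\cdots\times V_*^c\times V_*\times X^\omega$ reintroduces a prefix, i.e.\ exactly the factor problem $V_*^c\times X^\omega\cong X^\omega$ that is available in the pseudocompact case (Proposition~\ref{properclopenpseudocompact}) but not here. Resolving this—presumably by using the \emph{uncountability} of $A$ together with the product structure to absorb such prefixes—is the step I expect to be hardest; once it is settled, condition~\ref{divisiblebytwo} holds and $X^\omega$ is h-homogeneous. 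A fallback is to target condition~\ref{properclopen} directly, constructing a homeomorphism from $X^\omega$ onto a proper clopen subset by a telescoping/back-and-forth argument along the partition, again using $X^\omega\cong(X^\omega)^\omega$ and Theorem~\ref{dowpearl}.
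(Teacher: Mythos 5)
There is a genuine gap, and you flag it yourself: your whole argument rests on producing infinitely many pairwise disjoint, pairwise homeomorphic, non-empty clopen sets, but you only close the case in which some homeomorphism type recurs infinitely often among the pieces $V_\alpha$ of the disjoint clopen refinement. Your sketch for the complementary case (coordinate permutations of $(X^\omega)^\omega$, ``absorbing prefixes'') is not carried out, and as you yourself observe it reintroduces exactly the factor isomorphism $V\times X^\omega\cong X^\omega$ that is only available in the pseudocompact setting via corollary \ref{finitelymanycoordinates} --- which is the one setting excluded here. The preliminary observations (that $X$ has at least two points, that the refinement is uncountable, that $X^\omega$ is non-pseudocompact, that clopen boxes are non-Lindel\"of) are correct, but the last three do no work toward closing the gap.

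The missing idea is to stop classifying the $V_\alpha$ by homeomorphism type and instead shrink \emph{inside} each piece, using theorem \ref{dowpearl} together with first-countability. Fix $x\in X^\omega$ and a countable local base $\{U_n:n\in\omega\}$ at $x$ consisting of clopen sets ($X^\omega$ is first-countable and zero-dimensional). Each $V_\alpha$ is a neighbourhood of some point, so by the homogeneity of $X^\omega$ there is $n(\alpha)\in\omega$ and a clopen copy $U_{n(\alpha)}'\subseteq V_\alpha$ of $U_{n(\alpha)}$. The map $\alpha\mapsto n(\alpha)$ is defined on an uncountable index set and takes countably many values, so some value $n$ is attained on an infinite (indeed uncountable) set $S$. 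The union $\bigcup_{\alpha\in S}U_{n(\alpha)}'$ is clopen, because its trace on each piece of the clopen partition is clopen, and it is homeomorphic to $S\times U_n$ with $S$ infinite discrete, hence strongly divisible by $2$. This verifies condition \ref{divisiblebytwo} of proposition \ref{hhomogeneouspowers}, which is precisely the target you chose; so your overall strategy matches the paper's, but without this homogeneity-plus-pigeonhole step the proof does not go through.
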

\begin{proof}
Let $\UUU$ be an open cover of $X^\omega$ with no countable subcovers. By ultraparacompactness, there exists a refinement $\VVV$ of $\UUU$ consisting of pairwise disjoint non-empty clopen sets. Let $\VVV=\{C_\alpha:\alpha\in\kappa\}$ be an enumeration without repetitions, where $\kappa$ is an uncountable cardinal.

Now fix $x\in X^\omega$ and a local base $\{U_n:n\in\omega\}$ at $x$ consisting of clopen sets. Since $X^\omega$ is homogeneous by Theorem \ref{dowpearl}, for each $\alpha<\kappa$ we can find $n(\alpha)\in\omega$ such that a homeomorphic clopen copy $D_\alpha$ of $U_{n(\alpha)}$ is contained in $C_\alpha$. Since $\kappa$ is uncountable, there exists an infinite $S\subseteq\kappa$ such that $n(\alpha)=n(\beta)$ for every $\alpha,\beta\in S$. It is easy to check that $\bigcup_{\alpha\in S}D_\alpha$ is a non-empty clopen subset of $X^\omega$ that is strongly divisible by $2$. Therefore $X^\omega$ is h-homogeneous by Proposition \ref{hhomogeneouspowers}. 
\end{proof}

An application of Corollary 4.1.16, Theorem 7.3.2 and Theorem 7.3.16 in \cite{engelking} immediately yields the following.

\begin{corollary} Assume that $X$ is a metric space such that $\dim X=0$. If $X$ is non-separable then $X^\omega$ is h-homogeneous.
\end{corollary}

\noindent\textbf{Acknowledgements.} The author thanks his advisor Ken Kunen and the anonymous referee for valuable comments on earlier versions of this paper.

\appendix

\section{Proofs of the results by Terada and Matveev}

In this section we will present a somewhat unified approach to the proofs of Theorem \ref{teradanonpseudocompact} and Proposition \ref{matveev}. Notice that zero-dimensionality is never needed.

\begin{proof}[Proof of Theorem \ref{teradanonpseudocompact}]
Assume that $X$ has a $\pi$-base consisting of clopen sets that are homeomorphic to $X$. By Lemma \ref{dispensezerodimensionality}, we can fix a collection $\{X_n:n\in\omega\}$ consisting of pairwise disjoint non-empty clopen subsets of $X$ such that $X=\bigcup_{n\in\omega}X_n$. Let $C$ be a non-empty clopen subset of $X$. Since $C$ contains a clopen subset that is homeomorphic to $X$, we can fix a collection $\{C_n:n\in\omega\}$ consisting of pairwise disjoint non-empty clopen subsets of $C$ such that $C=\bigcup_{n\in\omega}C_n$.

We will recursively construct clopen sets $Y_n\subseteq X_n$ and $D_n\subseteq C_n$, together with partial homeomorphisms $h_n$ and $k_n$ for every $n\in\omega$. In the end, setting $h=\bigcup_{n\in \omega}(h_n\cup k_n)$ will yield the desired homeomorphism. Start by setting $Y_0=\varnothing$ and $h_0=\varnothing$. Then, let $D_0\subseteq C_0$ be a clopen set that is homeomorphic to $X_0\setminus Y_0$ and fix a homeomorphism $k_0:X_0\setminus Y_0\longrightarrow D_0$. Now assume that clopen sets $D_n\subseteq C_n$ and $Y_n\subseteq X_n$ have been defined. Let $Y_{n+1}\subseteq X_{n+1}$ be a clopen set that is homeomorphic to $C_n\setminus D_n$ and fix a homeomorphism $h_{n+1}:Y_{n+1}\longrightarrow C_n\setminus D_n$. Then, let $D_{n+1}\subseteq C_{n+1}$ be a clopen set that is homeomorphic to $X_{n+1}\setminus Y_{n+1}$ and fix a homeomorphism $k_{n+1}:X_{n+1}\setminus Y_{n+1}\longrightarrow D_{n+1}$.
\end{proof}

\begin{proof}[Proof of Proposition \ref{matveev}]
Let $\langle U_n:n\in\omega\rangle$ be a sequence of non-empty open subsets of $X$ that converges to a point $x$. One can easily obtain a sequence $\langle X_n:n\in\omega\rangle$ of pairwise disjoint non-empty clopen sets that converges to $x$, such that $x\notin X_n$ for each $n\in\omega$. Let $C$ be a non-empty clopen subset of $X$. Let $B$ be a clopen subset of $C$ that is homeomorphic to $X$. Fix a homeomorphism $f:X\longrightarrow B$ and let $C_n=f[X_n]$ for each $n\in\omega$.

Now define clopen sets $Y_n\subseteq X_n$ and $D_n\subseteq C_n$ for each $n\in\omega$ and a (partial) homeomorphism $h$ as in the proof of Theorem \ref{teradanonpseudocompact}, but start by choosing $Y_0$ homeomorphic to $C\setminus B$ and fixing a homeomorphism $h_0:Y_0\longrightarrow C\setminus B$. Finally, extend $h$ by setting $h(x)=f(x)$ for every $x\in X\setminus\bigcup_{n\in\omega}X_n$. It is easy to check that this yields the desired homeomorphism.
\end{proof}

\section{Some Descriptive Set Theory}

The following results seem to be folklore, but we could not find satisfactory references. For the definitions of \emph{analytic} and \emph{property of Baire}, see \cite{vanmill3}.

\begin{theorem}
Let $X$ be an analytic metric space. Then either $X$ has a completely metrizable dense subset or $X$ has a non-empty open subset of the first category.
\end{theorem}

\begin{proof}
Let $\widetilde{X}$ be the completion of $X$. By Theorem A.13.13 in \cite{vanmill3}, $X$ has the property of Baire in $\widetilde{X}$. Therefore, by Proposition A.13.10 in \cite{vanmill3}, we can write $X=G\cup M$, where $G$ is a $G_\delta$ subset of $\widetilde{X}$ and $M$ is of the first category in $\widetilde{X}$.

Since $G$ is a $G_\delta$ subset of the complete metric space $\widetilde{X}$, it is completely metrizable (see Theorem A.6.3 in \cite{vanmill3}). Since $X$ is dense in $\widetilde{X}$, the set $M$ is of the first category in $X$ as well (see Exercise A.13.7 in \cite{vanmill3}). In conclusion, if $G$ is dense in $X$ then the first alternative in the statement of the theorem will hold, otherwise the second alternative will hold.
\end{proof}

\begin{corollary}\label{dichotomy}
Let $X$ be an analytic metric space. Then either $X^\omega$ has a completely metrizable dense subset or $X^\omega$ is of the first category in itself.
\end{corollary}

\begin{proof}
If $X$ has a completely metrizable dense subset $D$ then $D^\omega$ is a completely metrizable dense subset of $X^\omega$ (see Lemma A.6.2 in \cite{vanmill3}).

So assume that $X$ has a non-empty open subset $U$ of the first category. Observe that $M_n=\{f\in X^\omega:f(n)\in U\}$ is of the first category in $X^\omega$ for every $n\in\omega$. Also, it is clear that $(X\setminus U)^\omega$ is closed nowhere dense in $X^\omega$. It follows that $X^\omega$ is of the first category in itself.
\end{proof}

\end{document}